\newtheorem{thm}{{\bf Theorem}}[section]
\newtheorem{lem}[thm]{{\bf Lemma}}
\newtheorem{cor}[thm]{{\bf Corollary}}
\newtheorem{prop}[thm]{{\bf Proposition}}
\newtheorem{rem}[thm]{Remark}
\newtheorem{ques}[thm]{Question}
\numberwithin{equation}{section}
\begin{document} 

\title[The boundary of a fibered face of the magic $3$-manifold]{ 
The boundary of a fibered face of the magic $3$-manifold and 
the asymptotic behavior of the minimal pseudo-Anosovs dilatations
}

\author[E. Kin]{%
    Eiko Kin
}
\address{%
Department of Mathematics, Graduate School of Science \\
Osaka University \\
Toyonaka, Osaka 560-0043, JAPAN
}
\email{%
       kin@math.sci.osaka-u.ac.jp}

\author[M. Takasawa]{%
    Mitsuhiko Takasawa
}
\address{%
       Department of Mathematical and Computing Sciences \\
       Tokyo Institute of Technology \\
       Ohokayama, Meguro, Tokyo 152-8552 Japan
}
\email{%
        takasawa@is.titech.ac.jp
}

\subjclass[2000]{%
	Primary 57M27, 37E30, Secondary 37B40
}

\keywords{%
	mapping class group, pseudo-Anosov, 
	dilatation, entropy, 
	magic manifold
}

\date{%
	\today
	}

\thanks{%
	The first author was partially supported by 
	Grant-in-Aid for Young Scientists (B) (No. 24740039), MEXT, Japan.
	} 

\begin{abstract} 
Let $\delta_{g,n}$ be the minimal dilatation of pseudo-Anosovs defined on an orientable surface of genus $g$ with $n$ punctures. 
Tsai proved that for any fixed $g \ge 2$, 
the logarithm of the minimal dilatation 
$\log \delta_{g,n}$ is on the order of $\log n/n$. 
We prove that 
if $2g+1$ is relatively prime to $s$ or $s+1$  for each $0 \le  s \le g$, then 
$$\limsup_{n \to \infty} \frac{n \log \delta_{g,n}}{\log n} \le 2.$$ 
Our examples of pseudo-Anosovs $\phi$'s which provide the upper bound above 
have the following property: 
The mapping torus $M_{\phi}$ of $\phi$ is a single hyperbolic $3$-manifold $N$ called the magic manifold, or 
the fibration of  $M_{\phi}$ comes from a fibration of $N$ 
 by Dehn filling  cusps along the boundary slopes of a fiber. 
The  main tool in this paper is the boundary of a fibered face of $N$. 
\end{abstract} 
\maketitle

\section{Introduction}
\label{section_intro}

Let $\varSigma= \varSigma_{g,n}$ be an orientable surface of genus $g$ with $n$ punctures and 
$\mathrm{Mod}(\varSigma)$ the mapping class group of $\varSigma$. 
According to the work of Nielsen and Thurston, 
elements of $\mathrm{Mod}(\varSigma)$ are classified into three types: 
periodic, reducible, pseudo-Anosov, see \cite{Thurston2}. 
Pseudo-Anosov mapping classes have rich dynamical and geometric properties. 
The hyperbolization theorem by Thurston~\cite{Thurston3} 
relates the dynamics of pseudo-Anosovs  and the geometry of hyperbolic fibered $3$-manifolds. 
The theorem asserts that 
$\phi \in \mathrm{Mod}(\varSigma)$ is pseudo-Anosov if and only if 
the mapping torus $M_{\phi}$ of $\phi$ is a hyperbolic $3$-manifold with finite volume.

Each pseudo-Anosov element $\phi \in \mathrm{Mod}(\varSigma)$ has a representative $\Phi: \varSigma \rightarrow \varSigma$ 
called a pseudo-Anosov homeomorphism. 
Such a homeomorphism is equipped with a constant $\lambda= \lambda(\Phi)>1$ 
called the {\it dilatation} of $\Phi$. 
If we let $\mathrm{ent}(\Phi)$ be the {\it topological entropy} of $\Phi$, then 
the identity $\mathrm{ent}(\Phi)= \log \lambda(\Phi)$ holds, see \cite[Expos\'e~10]{FLP}. 
The dilatation  $\lambda$ of $\Phi$ does not depend on the choice of a pseudo-Anosov homeomorphism $\Phi \in \phi$, 
and hence the {\it dilatation} $\lambda(\phi)$ of $\phi$ is defined to be $\lambda(\Phi)$. 
We call the quantities $\mathrm{ent}(\phi)= \log \lambda(\phi)$ and 
$\mathrm{Ent}(\phi)= |\chi(\varSigma)| \, \log \lambda(\phi)$ 
the {\it entropy} and {\it normalized entropy} of  $\phi$ respectively, 
where $\chi(\varSigma)$ is the Euler characteristic of $\varSigma$.

If we fix  $\varSigma$, the set of entropies of pseudo-Anosovs defined on $\varSigma$ 
is a closed discrete subset of  ${\Bbb R}$, 
see \cite{Ivanov}. 
In particular there exists a minimal entropy, and hence there exists a minimal dilatation. 
We denote by  $\delta(\varSigma)>1$,  the minimal dilatation of 
pseudo-Anosov elements in  $\mathrm{Mod}(\varSigma)$. 
The minimal dilatations  are determined in only a few cases, see \cite{CH}. 

Let us set $\delta_{g,n}= \delta(\varSigma_{g,n})$ and $\delta_g= \delta_{g,0}$. 
Penner proved in \cite{Penner} 
\footnote{Let  $A_g$ and  $B_g$ be  functions  on $g$. 
We write $A_g \asymp B_g$ if there exists a constant $c$, independent of $g$, such that 
$A_g/c  < B_g < c A_g$.}that 
$\log \delta_g \asymp \frac{1}{g}$. 
This work by Penner was a starting point on the study of the asymptotic behavior of the minimal dilatations on surfaces varying topology. 
Later it was proved by Hironaka-Kin\cite{HK} that $\log \delta_{0,n} \asymp \frac{1}{n}$, and 
by Tsai\cite{Tsai} that $\log \delta_{1,n} \asymp \frac{1}{n}$. 
See also Valdivia\cite{Valdivia}. 
The following  theorem, due to Tsai, is in contrast with the cases of genus $0$ or $1$.

\begin{thm}[\cite{Tsai}] 
\label{thm_Tsai}
For any fixed $g \ge 2$, we have 
$$\log \delta_{g,n} \asymp \frac{\log n}{n}.$$
\end{thm}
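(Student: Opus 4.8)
The plan is to prove the two estimates $\log\delta_{g,n}\le C_g\frac{\log n}{n}$ and $\log\delta_{g,n}\ge c_g\frac{\log n}{n}$ separately, the first by an explicit construction living in a fibered cone and the second by a train-track argument; the hypothesis $g\ge2$ enters both times, and is the reason the bound is $\frac{\log n}{n}$ rather than the $\frac1n$ of Hironaka-Kin \cite{HK} and Tsai \cite{Tsai}.

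\textbf{Upper bound.} For each $g\ge2$ I would fix a hyperbolic fibered $3$-manifold $W=W_g$ with $b_1(W)\ge2$ — a natural choice is the mapping torus of a carefully chosen pseudo-Anosov on $\varSigma_{g,1}$ with a single closed orbit of its suspension flow drilled out, or a Dehn filling of the magic manifold $N$ along boundary slopes of a fiber — and work in the cone $C_F$ over a fibered face $F$ of the Thurston norm ball of $W$. On $C_F$ the Thurston norm $x$ is linear, and a fibration whose fiber is a genus-$g$ surface with $n$ punctures corresponds to a primitive integral class $\alpha_n$ with $x(\alpha_n)=2g-2+n$ and with the prescribed divisibility on the cusps; its monodromy $\phi_n$ is pseudo-Anosov, so $\delta_{g,n}\le\lambda(\phi_n)$. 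By Fried's theorem (equivalently McMullen's Teichm\"uller polynomial $\Theta_F$), $\log\lambda(\phi_n)=\mathfrak e(\alpha_n)=\frac{1}{x(\alpha_n)}\,\mathfrak e(\hat\alpha_n)$, where $\mathfrak e$ is the continuous degree-$(-1)$ homogeneous extension of the entropy to $\mathrm{int}(C_F)$ and $\hat\alpha_n=\alpha_n/x(\alpha_n)$ is the normalized class. Since $x(\alpha_n)=2g-2+n\asymp n$, everything comes down to bounding $\mathfrak e(\hat\alpha_n)$; and the role of $g\ge2$ is that the normalization $x(\alpha_n)=2g-2+n$ together with the cusp divisibility needed to realize exactly $n$ punctures at fixed genus $g$ forces $\hat\alpha_n$ toward the boundary of $F$, where $\mathfrak e$ diverges. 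The heart of the argument is to choose $W_g$, $F$ and the $\alpha_n$ so that this divergence is only logarithmic, $\mathfrak e(\hat\alpha_n)\le C_g\log n$: concretely, one wants the relevant specialization of $\Theta_F$ to reduce, near the boundary, to an integer polynomial of degree $\asymp n$ with a bounded number of nonzero terms spread over an $O(\log n)$-range of exponents, whose house is $1+O(\log n/n)$. Locating the boundary strata of $F$ and reading off this first-order degeneration is exactly the analysis that the paper carries out for $N$, and I expect it to be the principal obstacle.

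\textbf{Lower bound.} Let $\phi$ be pseudo-Anosov on $\varSigma_{g,n}$ with dilatation $\lambda$. It is carried by an invariant train track with $O_g(n)$ branches (the count is controlled by $|\chi(\varSigma_{g,n})|=2g-2+n$), and $\lambda$ is the Perron--Frobenius eigenvalue of the nonnegative integer transition matrix $M$ of $\phi$ on the branches, which after restriction to a recurrent piece we may take irreducible. An arbitrary irreducible nonnegative integer matrix of size $N$ can have Perron--Frobenius eigenvalue as small as $1+\Theta(1/N)$ (companion matrices of $x^{N}-x-1$), and this is exactly what the genus $0$ and $1$ families realize, so the argument must genuinely use $g\ge2$. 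I would exploit that the track carries a subsurface of genus $\ge2$: this should force $M$, or a bounded power or block of it, to be rich enough — for example to have nonzero trace on an $\Omega(\log n)$-long window of iterates, or that $\phi^{k}$ already has more than a fixed positive power of $n$ periodic points for some $k=O(n)$ — which by Perron--Frobenius estimates yields $\lambda\ge1+\Omega(\log n/n)$. Making this genus-to-complexity step quantitative is the delicate point on this side, though I still regard the boundary estimate on the upper side as the true crux of the theorem.
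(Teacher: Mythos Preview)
This theorem is not proved in the paper at all: it is quoted, with attribution, from Tsai~\cite{Tsai}, and no argument is supplied. So there is no ``paper's own proof'' to compare your proposal against. What the present paper \emph{does} prove are the sharper upper bounds of Theorems~\ref{thm_Asymptotic}--\ref{thm_main}, and it is worth noting that your upper-bound strategy --- choosing a fibered hyperbolic $3$-manifold, pushing primitive classes toward $\partial\Omega$, and showing the normalized entropy there diverges only like $\log n$ --- is exactly the method this paper uses for \emph{those} theorems (with $W_g=N$ the magic manifold and Lemma~\ref{lem_asymptoticPoly} supplying the logarithmic rate). That is not Tsai's original upper-bound argument, which is an explicit Penner-type construction of pseudo-Anosovs by Dehn twists; the paper remarks after Theorem~\ref{thm_Asymptotic} that Tsai's examples already give $\limsup \tfrac{n\log\delta_{g,n}}{\log n}\le 2(2g+1)$.

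Your lower-bound sketch, however, is not a proof as it stands. You correctly observe that the generic Perron--Frobenius bound on an $O(n)\times O(n)$ nonnegative integer matrix only gives $\lambda\ge 1+\Omega(1/n)$, so something specific to $g\ge 2$ is required; but the sentence ``the track carries a subsurface of genus $\ge 2$: this should force $M$ \dots\ to be rich enough'' is precisely the step that needs an idea, and you have not supplied one. Tsai's actual mechanism is different from what you outline: she analyzes the permutation action of $\phi$ on the punctures and the singular points of the invariant foliations, uses the Euler--Poincar\'e formula (which for $g\ge 2$ forces higher-order singularities to exist), and passes to a controlled power of $\phi$ that fixes enough of this singular set to reduce to a bounded-complexity situation where a uniform lower bound applies. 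The $\log n$ factor arises from the size of the power needed. Nothing in your train-track paragraph captures this reduction, and without it the argument stalls at the $1/n$ barrier you yourself flag.
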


\noindent
We ask the following question which is motivated by Theorem~\ref{thm_Tsai}.

\begin{ques}
\label{ques_analogy} 
Given $g \ge 2$, 
does $\displaystyle \lim_{n \to \infty} \frac{n \log \delta_{g,n}}{\log n}  $ exist? 
What is its value?
\end{ques}

\noindent
This is an analogous question, posed by McMullen, 
which is asking whether $\displaystyle \lim_{g \to \infty} g \log \delta_g$ exists or not, see \cite{McMullen}.

\begin{thm}
\label{thm_Asymptotic}
Given $g \ge 2$, there exists a sequence $\{n_i\}^{\infty}_{i=0}$ with $n_i \to \infty$   
such that 
$$\limsup_{i \to \infty} \frac{n_i \log \delta_{g,n_i}}{\log n_i} \le 2.$$ 
\end{thm}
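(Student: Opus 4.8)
The plan is to produce the pseudo-Anosov maps $\phi$ realizing the bound as monodromies of fibrations of Dehn fillings of the magic manifold $N$, and to estimate their dilatations through the geometry of a fibered face $F$ of $N$. Recall that $N$ has three cusps and $b_1(N)=3$; its Thurston norm ball, a symmetric fibered face $F$, and the associated Teichm\"{u}ller polynomial $\Theta_F$ have been determined explicitly by Martelli--Petronio and in earlier work of the authors. For a primitive integral class $\alpha$ in the open fibered cone over $F$ one gets a fibration of $N$ with connected fiber $\varSigma_\alpha$; the Thurston norm gives $|\chi(\varSigma_\alpha)|=\|\alpha\|$, the standard count of boundary components of $\varSigma_\alpha$ as a sum of $\gcd$s of linear functionals of $\alpha$ gives the number $n(\alpha)$ of punctures and hence the genus via $2g(\alpha)-2+n(\alpha)=\|\alpha\|$, and by McMullen $\lambda(\phi_\alpha)$ is the largest root of the specialization $\Theta_F^{\,\alpha}(t)$.

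First I would pass to the boundary. A rational point of $\partial F$ is the projective class of a fibration not of $N$ itself but of a manifold $N'$ obtained by Dehn filling the cusps of $N$ along which the boundary slope of the fiber degenerates, and more generally a one-parameter family of filling slopes produces fibered manifolds $N'$ whose fibered faces $F'$ accumulate onto a face of $\partial F$. With fewer cusps the norm and the boundary-component count on $N'$ are constrained differently from those on $N$, and this is what lets the fiber genus be pinned: for each fixed $g\ge 2$ one chooses the filling slopes and the primitive integral classes on the resulting fibered faces $F'$ so that the $\gcd$ data forces the fiber to be $\varSigma_{g,n}$ with the prescribed genus $g$ and with $n$ running over an infinite set $\{n_i\}$, $n_i\to\infty$. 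Selecting these slopes and classes requires coprimality conditions, which is why one obtains only some sequence $n_i\to\infty$ rather than every $n$; the mapping torus $M_{\phi_{\alpha_i}}$ of the resulting monodromy is then the filled manifold $N'_i$, as the abstract asserts.

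It remains to estimate $\lambda(\phi_{\alpha_i})$. By Fried and McMullen the normalized entropy $\mathrm{Ent}(\phi_\alpha)=\|\alpha\|\,\log\lambda(\phi_\alpha)$, viewed on the fibered face, is finite on the interior and blows up at $\partial F$; the content of the estimate is that along the directions we use it blows up only logarithmically. Concretely, near the relevant stratum of $\partial F$ the specializations $\Theta_F^{\,\alpha_i}(t)$ are governed by one edge of the Newton polygon of $\Theta_F$, and a root estimate for this dominant balance gives $\lambda(\phi_{\alpha_i})^{\,|\chi(\varSigma_{\alpha_i})|}\le n_i^{\,2+o(1)}$, that is, $\mathrm{Ent}(\phi_{\alpha_i})\le (2+o(1))\log n_i$. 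Here the constant $2$ is what the Newton polygon of $\Theta_F$ delivers along this particular family; approaching a different stratum of $\partial F$ would give a larger constant or genus-changing fibers, and this is where the argument is delicate.

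Finally, since $|\chi(\varSigma_{g,n_i})|=2g-2+n_i$ and $\delta_{g,n_i}\le\lambda(\phi_{\alpha_i})$, the bound $\mathrm{Ent}(\phi_{\alpha_i})\le(2+o(1))\log n_i$ rearranges to $n_i\log\delta_{g,n_i}\le \tfrac{n_i}{2g-2+n_i}\,(2+o(1))\log n_i$, and letting $i\to\infty$ gives $\limsup_{i\to\infty} n_i\log\delta_{g,n_i}/\log n_i\le 2$. The main obstacle, as indicated above, is the simultaneous requirement on the chosen fillings and classes that (i) the fiber genus be exactly the target $g$, (ii) $n_i\to\infty$, and (iii) the leading constant in the root estimate be $2$ and not larger; (i) and (iii) in particular force one onto a specific edge of $\partial F$, and checking that this single edge meets all three demands is the heart of the construction.
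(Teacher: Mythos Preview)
Your overall strategy --- use the magic manifold $N$, locate fibered classes whose fibers have the target genus $g$, and estimate the dilatations via the specialization of the Teichm\"uller polynomial --- is the same as the paper's. But your framing of the construction through Dehn filling and through ``passing to the boundary $\partial F$'' is both unnecessary for this particular theorem and partly incorrect.

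The paper's proof is much more direct. It writes down explicit primitive classes $a_{(g,p)}=(p+g+1,\,2p+1,\,p-g)$ in the \emph{interior} of the fibered cone of $N$ itself --- no filling --- and verifies with the $\gcd$ formula for boundary components that the fiber $F_{a_{(g,p)}}$ is $\varSigma_{g,2p+4}$ whenever $\gcd(2g+1,p+g+1)=1$. Taking $p_i=(g+1)+(2g+1)i$ guarantees primitivity, so $N$ is literally a $\varSigma_{g,n_i}$-bundle with $n_i=2p_i+4$. The dilatation $r_{(g,p)}$ is the unique root $>1$ of the explicit polynomial $B_{(g,p)}(t)=t^{2p+1}(t^{2g+1}-1)+1-2t^{p+g+1}-t^{2g+1}$, and an elementary estimate (the paper's Lemma~2.1, substituting $t=p^{c/p}$ and comparing growth rates of the terms) gives $p\log r_{(g,p)}/\log p\to 1$, hence $n_i\log r_{(g,p_i)}/\log n_i\to 2$.

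Your assertion that ``a rational point of $\partial F$ is the projective class of a fibration of a Dehn-filled $N'$\,'' is not correct: points of $\partial F$ do not correspond to fibrations of anything --- the entropy function blows up there and the classes are not fibered. What is true (and is remarked in the paper) is that the projective classes $\overline{a}_{(g,p)}$ converge to a single boundary point $(\tfrac12,1,\tfrac12)\in\partial\Delta$ as $p\to\infty$, but each $a_{(g,p)}$ remains an honest interior fibered class of $N$. Dehn filling along fiber-boundary slopes does occur in the paper, but only in the proof of the stronger Theorem~1.4, where it is used to \emph{decrease} the number of punctures of an already-constructed genus-$g$ fiber from $2p+4$ down to $2p+1,2p+2,2p+3$; it is not used to pin the genus, and it never changes the dilatation. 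Your proposal inverts this logic.

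Your ``Newton-polygon dominant balance'' is morally the content of Lemma~2.1, but as written it is a heuristic, not a proof: you have not named a family of classes, verified that their fibers have genus exactly $g$, or shown that the resulting polynomials yield the constant $2$ rather than something larger. The paper carries all three of these steps out concretely.
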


\noindent
We note that for any $g \ge 2$, 
Tsai's examples in \cite{Tsai} yield the upper bound 
$\displaystyle\limsup_{n \to \infty} \tfrac{n \log \delta_{g,n}}{\log n} \le 2(2g+1)$, 
which is proved by a similar computation as in the proof of Theorem~\ref{thm_Asymptotic}.

We define the polynomial $B_{(g,p)}(t)$ for nonnegative integers $g$ and $p$: 
$$B_{(g,p)}(t)= t^{2p+1}(t^{2g+1}-1)+1-2t^{p+g+1}-t^{2g+1}. $$
We shall see that there exists a unique real root $r_{(g,p)}$ greater than $1$ of $B_{(g,p)}(t)$ 
such that 
$$\displaystyle\lim_{p \to \infty} \frac{p \log r_{(g,p)}}{ \log p} =1$$ 
(Lemma~\ref{lem_asymp_agp}). 
The root $r_{(g,p)}$ gives the following upper bound of the minimal dilatations.

\begin{thm}
\label{thm_UpperBound}
For $g \ge 2$ and $p \ge 0$, suppose that $\gcd(2g+1, p+g+1)=1$. 
Then 
$$\delta_{g,2p+i} \le r_{(g,p)} \hspace{4mm}\mbox{for\  each\ } i \in \{1,2,3,4\}. $$
\end{thm}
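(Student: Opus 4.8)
The plan is to realize each surface $\varSigma_{g,2p+i}$, equipped with a suitable monodromy, as a fiber arising from a fibration of the magic manifold $N$ in the cone over its fibered face, and then to bound the dilatation of that monodromy by $r_{(g,p)}$ by comparing the relevant specialization of the Teichm\"uller polynomial of the face with the polynomial $B_{(g,p)}(t)$.

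First I would recall from the earlier sections the fibered face $\Delta$ of $N$, the Thurston norm on $H_2(N,\partial N;\mathbb{R})\cong\mathbb{R}^3$, and, for a primitive integral class $\alpha$ in the open cone $C_\Delta$ over $\Delta$, the associated fibration $N\to S^1$ with fiber $F_\alpha$ satisfying $|\chi(F_\alpha)|=\|\alpha\|$ and monodromy whose dilatation $\lambda(\alpha)$ is the largest root of the specialization of the Teichm\"uller polynomial $\Theta_\Delta$ at $\alpha$. I would also recall that the number of boundary circles of $F_\alpha$ on a fixed cusp torus of $N$ is the absolute value of a fixed primitive linear functional evaluated on $\alpha$; together with $|\chi(F_\alpha)|=\|\alpha\|$ this pins down the genus of $F_\alpha$. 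Given $g\ge 2$ and $p\ge 0$ with $\gcd(2g+1,p+g+1)=1$, I would then write down an explicit primitive integral class $\alpha=\alpha_{(g,p)}\in C_\Delta$ whose coordinates are governed by $p+g+1$ and $2g+1$ (note that $2p+1=2(p+g+1)-(2g+1)$), and verify: (a) $F_{\alpha_{(g,p)}}$ is an orientable surface of genus $g$ whose $2p+4$ boundary circles are distributed over the three cusps of $N$ (one expects the distribution $2p+1$, $1$, $2$), where the coprimality hypothesis is exactly the condition under which the per-cusp boundary counts and $\|\alpha_{(g,p)}\|$ force the genus to be $g$ rather than a smaller value; and (b) by Dehn filling, or not, each of the three cusps of $N$ along the boundary slope of $F_{\alpha_{(g,p)}}$ — capping the corresponding boundary circles with disks, which leaves the genus unchanged — one obtains, for the four choices of whether to fill the cusp carrying one boundary circle and the cusp carrying two, fibrations with fibers $\varSigma_{g,2p+1},\varSigma_{g,2p+2},\varSigma_{g,2p+3},\varSigma_{g,2p+4}$; here one must check that in each case the capped-off mapping torus is still hyperbolic, equivalently the monodromy is still pseudo-Anosov (no $1$- or $2$-pronged obstruction at a capped circle).

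Finally, I would estimate the dilatations. Since capping boundary circles (equivalently, forgetting punctures) does not increase the topological entropy, each of the four monodromies has dilatation at most $\lambda(\alpha_{(g,p)})$, so it suffices to prove $\lambda(\alpha_{(g,p)})\le r_{(g,p)}$. For this I would compute the specialization of $\Theta_\Delta$ at $\alpha_{(g,p)}$ and show it equals $\pm t^{k}B_{(g,p)}(t)$ for some $k$ (or is divisible by a factor whose largest root is $r_{(g,p)}$); by the Perron--Frobenius property of $\lambda(\alpha_{(g,p)})$ and the uniqueness of the real root greater than $1$ of $B_{(g,p)}$ established earlier, this yields $\lambda(\alpha_{(g,p)})=r_{(g,p)}$ (or $\le r_{(g,p)}$), completing the proof. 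I expect the main obstacle to be the bookkeeping in steps (a)--(b): choosing $\alpha_{(g,p)}$ so that $\|\alpha_{(g,p)}\|$, the three per-cusp boundary counts, the genus, and the Teichm\"uller specialization all come out simultaneously correct, and recognizing $\gcd(2g+1,p+g+1)=1$ as precisely the hypothesis that forces the genus to equal $g$. Once the class is correctly identified, the dilatation estimate should reduce to routine manipulation of $B_{(g,p)}(t)$.
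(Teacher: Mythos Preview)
Your proposal is essentially the paper's own argument. The paper takes the explicit primitive class $a_{(g,p)}=(p+g+1,\,2p+1,\,p-g)\in int(C_\Delta)$, computes via the boundary formulas that $F_{a_{(g,p)}}\cong\varSigma_{g,2p+4}$ with $\sharp(\partial_\beta F)=2p+1$ and $\sharp(\partial_\alpha F)+\sharp(\partial_\gamma F)=3$ (the split between $T_\alpha$ and $T_\gamma$ depends on the parity of $p+g$, not a fixed $1,2$ as you guessed), and identifies the Teichm\"uller specialization $f_{(p+g+1,2p+1,p-g)}(t)$ with $B_{(g,p)}(t)$, so $\lambda(a_{(g,p)})=r_{(g,p)}$ on the nose. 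For $i=1,2,3$ one Dehn fills one or both of $T_\alpha,T_\gamma$ along boundary slopes.

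Two small corrections to your sketch. First, the obstruction to the capped monodromy remaining pseudo-Anosov with the \emph{same} stable foliation is only a $1$-prong at a capped boundary circle; a $2$-pronged boundary caps to a regular point and is harmless. The paper checks exactly this (Lemma~\ref{lem_No_one}): for $g\ge 2$ every component of $\partial_\alpha F\cup\partial_\gamma F$ carries at least two prongs. Second, once you know the stable foliation extends, the extended monodromy is pseudo-Anosov with dilatation \emph{equal} to $r_{(g,p)}$, so your appeal to ``capping does not increase entropy'' is unnecessary---the paper gets equality directly and never invokes that monotonicity.
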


If $g$ enjoys $(*)$ in the next theorem~\ref{thm_main}, then 
one can take a subsequence $\{n_i\}_{i=0}^{\infty}$ in Theorem~\ref{thm_Asymptotic} 
to be the sequence $\{n\}_{n=1}^{\infty}$ of natural numbers.

\begin{thm}
\label{thm_main}
Suppose that $g \ge 2$  satisfies 
\begin{quote} 
$(*)$ \hspace{2mm}
$\gcd(2g+1, s)=1$ or $\gcd(2g+1, s+1)=1$ for each $0 \le  s \le g$.
\end{quote}
Then 
$$\limsup_{n \to \infty} \frac{n \log \delta_{g,n}}{\log n} \le 2.$$ 
\end{thm}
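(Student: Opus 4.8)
The plan is to deduce Theorem~\ref{thm_main} from Theorem~\ref{thm_UpperBound} and Lemma~\ref{lem_asymp_agp} by showing that, when $g$ satisfies $(*)$, \emph{every} sufficiently large $n$ is of the form $2p+i$ with $i\in\{1,2,3,4\}$, $p\ge 0$, and $\gcd(2g+1,p+g+1)=1$; granting such a $p$, Theorem~\ref{thm_UpperBound} gives $\delta_{g,n}\le r_{(g,p)}$, and the asymptotic estimate then falls out of Lemma~\ref{lem_asymp_agp}. The first observation is combinatorial: the condition $n=2p+i$ with $i\in\{1,2,3,4\}$ is equivalent to $(n-4)/2\le p\le (n-1)/2$, an interval of length $3/2$, which therefore always contains exactly two (consecutive) integers, namely $p_0:=\lfloor (n-1)/2\rfloor$ and $p_0-1$. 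The corresponding values of $p+g+1$ are the consecutive integers $p_0+g+1$ and $p_0+g$, so it suffices to know that one of two consecutive integers is always coprime to $2g+1$, provided $g$ satisfies $(*)$.

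Thus the key preliminary step is the number-theoretic claim that $(*)$ implies: for every integer $m$, $\gcd(2g+1,m)=1$ or $\gcd(2g+1,m+1)=1$. I would prove this by reducing $m$ modulo $2g+1$ to $m'\in\{0,1,\dots,2g\}$ and using the symmetry $\gcd(2g+1,s)=\gcd(2g+1,2g+1-s)$. If $0\le m'\le g$, the claim is exactly $(*)$ with $s=m'$. If $m'=2g$, then $\gcd(2g+1,m')=\gcd(2g+1,2g)=1$ since consecutive integers are coprime. If $g+1\le m'\le 2g-1$, put $s=2g-m'\in\{1,\dots,g-1\}$; the symmetry identifies $\gcd(2g+1,m')$ with $\gcd(2g+1,s+1)$ and $\gcd(2g+1,m'+1)$ with $\gcd(2g+1,s)$, so the claim again reduces to $(*)$. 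Applying this claim with $m=p_0+g$ selects the desired $p\in\{p_0-1,p_0\}$ (and $p\ge 0$ once $n$ is large), and Theorem~\ref{thm_UpperBound} yields $\delta_{g,n}\le r_{(g,p)}$ with $i=n-2p\in\{1,2,3,4\}$.

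It remains to push the inequality through to the asymptotics. Since $p\in\{p_0-1,p_0\}$ with $p_0=\lfloor (n-1)/2\rfloor$, we have $p\to\infty$, $n/p\to 2$ and $\log p/\log n\to 1$ as $n\to\infty$. Writing
\[
\frac{n\log\delta_{g,n}}{\log n}\ \le\ \frac{n\log r_{(g,p)}}{\log n}\ =\ \frac{n}{p}\cdot\frac{\log p}{\log n}\cdot\frac{p\log r_{(g,p)}}{\log p},
\]
the first two factors converge to $2$ and $1$, and the third converges to $1$ by Lemma~\ref{lem_asymp_agp}; hence the right-hand side converges to $2$, which forces $\limsup_{n\to\infty} n\log\delta_{g,n}/\log n\le 2$. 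In particular the subsequence in Theorem~\ref{thm_Asymptotic} can be taken to be all of $\{n\}_{n=1}^{\infty}$.

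The serious analytic content is already isolated in Lemma~\ref{lem_asymp_agp} and Theorem~\ref{thm_UpperBound}, so the only real obstacle inside this argument is the elementary bookkeeping: one must check that the ``width-four'' set $\{1,2,3,4\}$ of admissible values of $i$ is exactly what is needed to guarantee two consecutive candidate values of $p$ regardless of the parity of $n$, and that hypothesis $(*)$ — which is only stated for $0\le s\le g$ — genuinely covers every residue class modulo $2g+1$. Both are routine congruence manipulations, but they are the steps where the hypotheses of the theorem are actually used.
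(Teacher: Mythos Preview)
Your proof is correct and follows essentially the same approach as the paper's: both reduce to Theorem~\ref{thm_UpperBound} and the asymptotics of $r_{(g,p)}$, using $(*)$ to ensure that for every target value of $n$ one of two consecutive candidates for $p$ yields a primitive class. The paper parametrizes by $p$ and restricts to $j\in\{3,4\}$ (so that the shift $p\mapsto p+1$ still covers $n=2p+j$), whereas you parametrize by $n$ and locate $p\in\{p_0-1,p_0\}$; these are reformulations of one another. One genuine addition in your write-up is the explicit verification, via the symmetry $\gcd(2g+1,s)=\gcd(2g+1,2g+1-s)$, that the hypothesis $(*)$ stated only for $0\le s\le g$ actually propagates to all residues modulo $2g+1$ --- the paper uses this fact but does not spell it out.
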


\noindent
For example, $(*)$ holds for $g=4$ since 
$9$ is relatively prime to $1,2,4$ and $5$; 
$(*)$ does not hold for $g=7$ because 
$\gcd(15,5) = 5 $ and $\gcd(15,6) =3$. 
We point out that 
infinitely many $g$'s satisfy $(*)$. 
In fact if $2g+1$ is prime, then 
$2g+1$ is relatively prime to $s'$ for each $1 \le s' \le g+1$. 
Such a  $g$ enjoys $(*)$, and this leads to 

\begin{cor}
If $2g+1$ is prime for $g \ge 2$, then 
$$\limsup_{n \to \infty} \frac{n \log \delta_{g,n}}{\log n} \le 2.$$ 
\end{cor}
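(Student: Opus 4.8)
The plan is to deduce the Corollary directly from Theorem~\ref{thm_main}: I claim that the primality of $2g+1$ already forces the number-theoretic condition $(*)$, so nothing beyond Theorem~\ref{thm_main} is needed. Thus the task reduces to verifying that, when $2g+1$ is prime and $g \ge 2$, one has $\gcd(2g+1,s)=1$ or $\gcd(2g+1,s+1)=1$ for each $0 \le s \le g$.

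First I would record the elementary observation that if $2g+1$ is prime, then its only positive divisors are $1$ and $2g+1$ itself; hence for every integer $m$ with $1 \le m \le 2g$ we have $2g+1 \nmid m$, and since $2g+1$ is prime this gives $\gcd(2g+1,m)=1$. In particular $\gcd(2g+1,m)=1$ holds for every $m$ in the range $1 \le m \le g+1$.

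Next, fix $s$ with $0 \le s \le g$ and set $m = s+1$. Then $1 \le m \le g+1 \le 2g$, so by the previous step $\gcd(2g+1, s+1) = \gcd(2g+1,m) = 1$. Thus the disjunction in $(*)$ holds for this $s$ — in fact its second alternative holds for every admissible $s$ — and therefore $g$ satisfies $(*)$.

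Finally, applying Theorem~\ref{thm_main} to such $g$ yields $\limsup_{n\to\infty} \frac{n\log\delta_{g,n}}{\log n} \le 2$, which is exactly the assertion of the Corollary. There is no substantial obstacle: the mathematical content is entirely carried by Theorem~\ref{thm_main}, and the Corollary merely isolates the large, transparently described family of genera — those with $2g+1$ prime — for which the clean bound $2$ holds along the full sequence $\{n\}_{n=1}^{\infty}$ of puncture numbers rather than only along a subsequence.
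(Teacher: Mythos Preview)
Your proof is correct and follows exactly the same route as the paper: the paper observes, in the paragraph immediately preceding the Corollary, that when $2g+1$ is prime it is relatively prime to every $s'$ with $1 \le s' \le g+1$, so condition $(*)$ holds and Theorem~\ref{thm_main} applies. Your argument spells this out with the same choice $m = s+1$, so there is no substantive difference.
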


\begin{rem}
One can simplify $(*)$ in Theorem~\ref{thm_main}, 
since $2g+1$ is relative prime to $1,2$ and $g$. 
In the case  $g \ge 5$, 
$(*)$  is equivalent to 
\begin{quote}
$(**)$ \hspace{2mm} 
$\gcd(2g+1, s)=1$ or $\gcd(2g+1, s+1)=1$ for each $3 \le  s \le g-2$. 
\end{quote}
\end{rem}

Our results are proved by using the theory  on fibered faces of hyperbolic fibered $3$-manifolds $M$, 
developed by Thurston\cite{Thurston1}, Fried\cite{Fried}, Matsumoto\cite{Matsumoto} 
and McMullen\cite{McMullen}. (See Section~\ref{section_Thurston}.) 
Let $\| \cdot \|: H_2(M, \partial M; {\Bbb R}) \rightarrow {\Bbb R} $ be the Thurston norm, and 
let $\Omega$ be a fibered face of $M$. 
The work of Thurston  tells us  that 
if $M$ has the second Betti number more than $1$, 
then it admits a family of fibrations on $M$ dominated by $int(C_{\Omega})$, 
where $C_{\Omega}$ is the core over $\Omega$ with the origin and $int(C_{\Omega})$ is its interior. 
In other words, such a fibered $3$-manifold  provides infinitely many pseudo-Anosovs 
defined on surfaces with variable topology. 
By work of Fried, 
the entropy function defined on these fibrations 
admits a unique continuous extension 
$\mathrm{ent}: int(C_{\Omega}) \rightarrow {\Bbb R}$. 
By the continuity of $\|\cdot\|$ and $\mathrm{ent}$, 
we have the continuous function 
$$\mathrm{Ent}= \|\cdot \| \, \mathrm{ent}(\cdot) : int(C_{\Omega}) \to \mathbb{R}.$$ 
The normalized entropy function  $\mathrm{Ent}$ is constant on each ray in $int(C_{\Omega})$ through the origin.  
It is shown by Fried that 
 the restriction $\mathrm{ent}|_{int(\Omega)} (= \mathrm{Ent}|_{int(\Omega)}): int(\Omega) \rightarrow {\Bbb R}$ 
has the property such that 
$\mathrm{ent}(a)$ goes to $\infty$ as $a \in int(\Omega)$ goes to a point on the boundary of $ \Omega$.

These properties give us the following observation: 
Fix a manifold $M$ as above. 
For any compact set $\mathcal{D} \subset int(\Omega)$, 
there exists a constant $C= C_{\mathcal D}>0$ satisfying the following. 
Let $a \in int(C_{\Omega})$ be any integral class  of $H_2(M, \partial M; {\Bbb Z}) $
and let $\Phi_a$ be  the monodromy of the fibration associated to $a$. 
Then the normalized entropy $\mathrm{Ent}(\Phi_a)$ is bounded by $C$ from above 
whenever $\overline{a} \in \mathcal{D}$, 
where $\overline{a}$ is the projective class of $a$. 

This observation enables us to investigate the asymptotic behavior of the minimal dilatations. 
The following asymptotic inequalities (which are the best known upper bounds) are proved by using a similar technique.

\begin{enumerate}
\item[(1)] 
$\displaystyle  \limsup_{n \to \infty} n \log \delta_{0,n} \le 2 \log (2+ \sqrt{3})$, see \cite{HK,KT}. 

\item[(2)] 
$\displaystyle  \limsup_{n \to \infty} n \log \delta_{1,n} \le 2 \log \lambda_0$, 
where 
$\lambda_0 \approx 2.2966$ is  the largest real root of $t^4 -2t^3 - 2t+1$, 
see \cite{KKT}. 

\item[(3)] 
 $\displaystyle \limsup_{g \to \infty} \, g \log  \delta_g \le \log(\tfrac{3+ \sqrt{5}}{2})$, 
see \cite{Hironaka,AD,KT1}. 
\end{enumerate}

\noindent
However for any fixed $g \ge 2$, 
the observation as above doesn't work 
to investigate the asymptotic behavior  $ \delta_{g,n}$ varying $n$ because of Theorem~\ref{thm_Tsai}. 
Theorem~\ref{thm_Tsai} implies that 
there exists no constant $C>0$, independent of $n$ so that 
$|\chi(\varSigma_{g,n})| \log \delta_{g,n}< C$. 
Thus if there exists a sequence of integral  classes $\{a_i\}$ 
with $a_i \in int(C_{\Omega}) \subset H_2(M, \partial M; {\Bbb Z})$  such that 
the fiber of the fibration associated to $a_i$ is a surface of genus $g$ having $n_i$ boundary components 
with $n_i \to \infty$, 
then the accumulation points of the sequence of projective classes $\{\overline{a}_i\}$ must lie on the boundary of $\Omega$.   
(This is because there exists no constant $C>0$, independent of $i$, such that 
$ \mathrm{Ent}(\Phi_{a_i}) (= |\chi(\varSigma_{g, n_i})| \log(\Phi_{a_i})) \le C$.)

Nevertheless we focus on a fibered face of a particular hyperbolic fibered $3$-manifold, called the {\it magic manifold} $N$. 
This manifold is the exterior of the $3$ chain link $\mathcal{C}_3$, see Figure~\ref{fig_poly}. 
Our examples of pseudo-Anosovs $\phi$'s which provide the upper bounds in 
Theorems~\ref{thm_Asymptotic}, \ref{thm_UpperBound} and  \ref{thm_main} 
have the following property: 
The mapping torus $M_{\phi}$ of $\phi$ is homeomorphic to $N$, or 
the fibration of  $M_{\phi}$ comes from a fibration of $N$ 
 by Dehn filling  cusps along the boundary slopes of a fiber. 
 We also point out that a family of the integral classes of $H_2(N, \partial N; {\Bbb Z})$  is a main ingredient 
 to prove the asymptotic inequalities (1)--(3) above, see \cite{KKT}.

\begin{figure}[htbp]
\begin{center}
\includegraphics[width=4.7in]{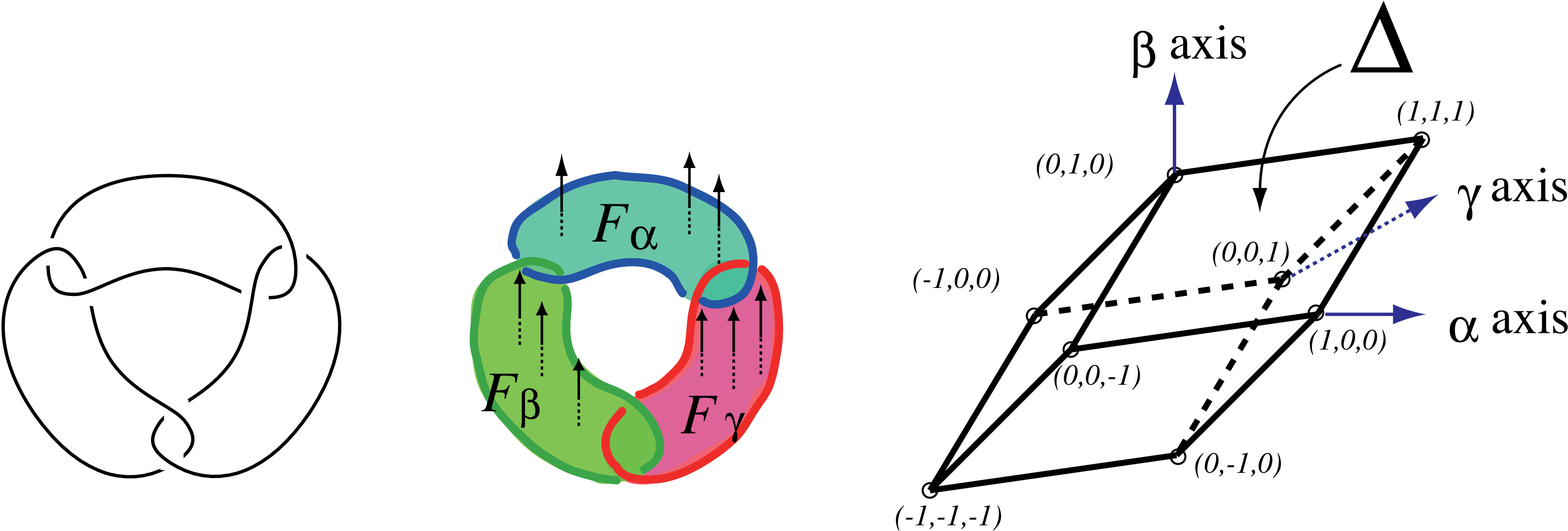}
\caption{(left) $3$ chain link $\mathcal{C}_3$. 
(center) $F_{\alpha}$, $F_{\beta}$, $F_{\gamma}$. 
[arrows indicate the normal direction of oriented surfaces.] 
(right) Thurston norm ball $U_N$. (fibered face $\Delta$ is indicated.)}
\label{fig_poly}
\end{center}
\end{figure}

We turn to the hyperbolic volume of hyperbolic $3$-manifolds. 
The set of volumes of hyperbolic $3$-manifolds is a well-ordered closed subset in ${\Bbb R}$ of order type $\omega^{\omega}$, 
see \cite{Thurston}. 
In particular if we fix a surface $\varSigma$, 
then there exists a minimum among volumes of hyperbolic $\varSigma$-bundles over the circle. 

The proofs of Theorems~\ref{thm_Asymptotic}, \ref{thm_main} imply the following.

\begin{prop}
\label{prop_volume}
Given $g \ge 2$, there exists a sequence $\{n_i\}^{\infty}_{i=0}$ with $n_i \to \infty$   
such that  the minimal volume of $\varSigma_{g,n_i}$-bundles over the circle is less than or equal to 
$\mathrm{vol}(N) \approx 5.3334$, the volume of the magic manifold $N$. 
Furthermore if $g \ge 2$  satisfies $(*)$, 
then for any $n \ge 3$, the minimal volume of $\varSigma_{g,n}$-bundles over the circle is less than or equal to $\mathrm{vol}(N)$. 
\end{prop}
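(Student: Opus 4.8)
The plan is to read off the required bundles from the constructions already used to prove Theorems~\ref{thm_Asymptotic} and~\ref{thm_main}, and then to compare volumes by means of Thurston's hyperbolic Dehn surgery theorem.

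First I would recall what those proofs produce. For each fixed $g\ge 2$ they yield a sequence of pseudo-Anosov homeomorphisms $\Phi_i\colon \varSigma_{g,n_i}\to\varSigma_{g,n_i}$ with $n_i\to\infty$, and, when $g$ satisfies $(*)$, one such homeomorphism $\Phi_n$ for every integer $n\ge 3$. By construction (the property emphasized in the introduction), the mapping torus $M_{\Phi_i}$ is either homeomorphic to the magic manifold $N$ itself or is obtained from $N$ by Dehn filling a non-empty set of cusps along the boundary slopes of a fiber lying over the fibered face $\Delta$. In either case $M_{\Phi_i}$ is a hyperbolic $\varSigma_{g,n_i}$-bundle over the circle, hyperbolicity being automatic from Thurston's hyperbolization theorem~\cite{Thurston3} since $\Phi_i$ is pseudo-Anosov.

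Next I would invoke the classical fact, due to Thurston~\cite{Thurston}, that volume does not increase under hyperbolic Dehn filling: if a hyperbolic $3$-manifold $M'$ is obtained from a cusped hyperbolic $3$-manifold $M$ by Dehn filling some of its cusps, then $\mathrm{vol}(M')\le\mathrm{vol}(M)$. Applied with $M=N$, this gives $\mathrm{vol}(M_{\Phi_i})\le\mathrm{vol}(N)\approx 5.3334$, with equality when $M_{\Phi_i}\cong N$. Since the volume spectrum of hyperbolic $3$-manifolds is well ordered (so that a minimal volume among hyperbolic $\varSigma_{g,n}$-bundles over the circle indeed exists, as recalled just before the statement), the minimal volume of $\varSigma_{g,n_i}$-bundles over the circle is at most $\mathrm{vol}(M_{\Phi_i})\le\mathrm{vol}(N)$. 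This proves the first assertion for the sequence $\{n_i\}$; under $(*)$, since such a bundle is available for every $n\ge 3$, the ``furthermore'' assertion follows.

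The only delicate point is the bookkeeping inherited from the earlier proofs: one must verify that each surface $\varSigma_{g,n_i}$ (respectively $\varSigma_{g,n}$ for every $n\ge 3$ when $(*)$ holds, including the smallest values of $n$) is genuinely realized by a fibration whose mapping torus is $N$ or a Dehn filling of $N$. I expect this to be the main --- though essentially routine --- obstacle; once the topological identification is in hand, the volume estimate is an immediate consequence of the monotonicity of volume under Dehn filling.
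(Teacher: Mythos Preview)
Your proposal is correct and follows essentially the same route as the paper: the paper's proof simply cites the constructions in the proofs of Theorems~\ref{thm_Asymptotic} and~\ref{thm_main} to produce, for the relevant $n$, a $\varSigma_{g,n}$-bundle that is either $N$ itself or a Dehn filling of $N$ along boundary slopes of a fiber, and then invokes the fact that hyperbolic volume decreases under Dehn filling~\cite{NZ,Thurston}. Your write-up is in fact slightly more careful (explicitly noting hyperbolicity of the filled manifold via the pseudo-Anosov monodromy, and the existence of a minimal volume), but the substance is the same.
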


We close the introduction by asking

\begin{ques}[cf. Theorems~\ref{thm_Asymptotic} and \ref{thm_main}]
Does $\displaystyle \limsup_{n \to \infty} \tfrac{n \log \delta_{g,n}}{\log n} \le 2$ hold 
for any $g \ge 2$? 
\end{ques}

\noindent
{\bf Acknowledments.} 
We would like to thank Eriko Hironaka for helpful conversations and comments. 

\section{Roots of polynomials} 
\label{section_roots}

This section concerns the asymptotic behavior of roots of families of polynomials. 
Let 
$$g(t)= a_n t^{b_n}+ a_{n-1} t^{b_{n-1}} + \cdots + a_1 t^{b_1} + a_0$$ 
be a polynomial with  real coefficients $a_0, a_1, \cdots, a_n$ 
($a_1, a_2, \cdots, a_n \ne 0$),  
where  
$g(t)$ is arranged in the order of descending powers of $t$. 
Let $\mathfrak{D}(g)$ be the number of variations in signs of the coefficients 
$a_n, a_{n-1}, \cdots, a_0$. 
For example if $g(t)= +t^4+t^3-2t^2+t-1$, then 
$\mathfrak{D}(g)=3$; 
if $h(t)=  +t^4+t^3-2t^2+t+1$, then 
$\mathfrak{D}(h)=2$. 
 Descartes's rule of signs (see \cite{Wang}) says that 
 the number of positive real roots of $g(t)$ 
(counted with multiplicities) is equal to either $\mathfrak{D}(g)$ 
 or less than $\mathfrak{D}(g)$  by an even integer.

\begin{lem}
\label{lem_asymptoticPoly}
Let $r \ge 0$, $s>0$ and $u > 0$ be integers. 
Let 
\begin{eqnarray*}
P_m(t)&=& t^{2m+r}(t^s-1)+1 - Q(t) t^m-t^u 
\\
&=& t^{2m+r+s}-t^{2m+r}- Q(t)t^m - t^u+1 
\end{eqnarray*}
be a polynomial for each $m \in {\Bbb N}$, 
where $Q(t)$ is a polynomial whose coefficients are positive integers. 
($Q(t)$ could be a positive constant.) 
\begin{enumerate}
\item[(1)] 
Suppose that  $t^{2m+r+s}$ is the leading term of $P_m(t)$. 
Then $P_m(t)$ has a unique real root $\lambda_m$ greater than $1$.

\item[(2)]
Given $0< c_1<1$ and  $c_2>1$, 
we have 
$$m^{\frac{c_1}{m}} < \lambda_m<  m^{\frac{c_2}{m}} \hspace{4mm} \mbox{for\ }\  m\ \mbox{large}. $$
In particular 
$$\lim_{m \to \infty} \frac{m \log \lambda_m}{\log m}=1.$$

\item[(3)] 
For any real numbers $q \ne 0$ and $v$, we have 
$$\displaystyle \lim_{m \to \infty} \frac{(qm+v) \log \lambda_{m}}{ \log (qm+v)}=q.$$

\end{enumerate}
\end{lem}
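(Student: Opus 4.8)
The plan is to prove the three parts in order, each building on the previous.

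\textbf{Part (1).} First I would apply Descartes's rule of signs to the polynomial written in descending powers,
$$P_m(t)= t^{2m+r+s}-t^{2m+r}- Q(t)t^m - t^u+1,$$
to bound the number of positive real roots. The hypothesis that $t^{2m+r+s}$ is the leading term guarantees the top power strictly exceeds $2m+r$, which in turn (for $m$ large) exceeds the degree of $Q(t)t^m$ and of $t^u$; so the sign sequence of the nonzero coefficients reads $+,-,\dots,-,-,+$ (a block of $+$ contributions only at the top and from the constant $1$, with all intermediate coefficients $\le 0$). This gives $\mathfrak{D}(P_m)=2$, so $P_m$ has either $0$ or $2$ positive real roots. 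Next I evaluate: $P_m(1)= 1^{\,\cdot}(1-1)+1-Q(1)-1 = -Q(1)<0$ since $Q$ has positive coefficients, while $P_m(t)\to +\infty$ as $t\to\infty$ because the leading coefficient is $+1$. Hence there is a root in $(1,\infty)$; combined with the Descartes count of exactly two positive roots (the other lying in $(0,1)$, detected since $P_m(0)=1>0$ and $P_m(1)<0$), the root in $(1,\infty)$ is unique. Call it $\lambda_m$.

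\textbf{Part (2).} Here the idea is a squeeze argument using monotonicity of $P_m$ near $\lambda_m$ and the sign of $P_m$ evaluated at the candidate bounds $t=m^{c/m}$. Since $\lambda_m$ is the largest positive root and the leading coefficient is positive, $P_m(t)>0$ for $t>\lambda_m$ and $P_m(t)<0$ for $t$ slightly below $\lambda_m$ (in $(1,\lambda_m)$). So it suffices to show $P_m(m^{c_1/m})<0$ and $P_m(m^{c_2/m})>0$ for $m$ large. Write $t=m^{c/m}$, so $t^m=m^c$ and $t^{km}=m^{kc}$. The dominant balance in $P_m(t)$ is between $t^{2m+r+s}\sim m^{2c}\cdot t^{r+s}$ and $-t^{2m+r}\sim -m^{2c}\cdot t^r$ and $-Q(t)t^m\sim -Q(1)m^{c}$; since $t^{r+s}-t^r = t^r(t^s-1)$ and $t^s-1 \sim s\log m \cdot (c/m)$ for $t=m^{c/m}\to 1$, the first two terms together contribute on the order $m^{2c}\cdot (sc\log m)/m = sc\,m^{2c-1}\log m$, while $-Q(1)m^c$ is the competing negative term. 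For $c=c_2>1$ we have $2c-1>c$, so the positive term $sc\,m^{2c-1}\log m$ dominates and $P_m(m^{c_2/m})>0$. For $c=c_1<1$, the exponents $2c-1$ and $c$ satisfy $2c-1<c$, so $-Q(1)m^c$ dominates and $P_m(m^{c_1/m})<0$; one also checks $m^{c_1/m}>1$ for $m$ large so this root estimate lies in the relevant range. This yields $m^{c_1/m}<\lambda_m<m^{c_2/m}$ for large $m$, and taking logarithms gives $c_1 < \frac{m\log\lambda_m}{\log m} < c_2$; letting $c_1\uparrow 1$, $c_2\downarrow 1$ forces the limit to be $1$.

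\textbf{Part (3).} This is a direct consequence of Part (2). Write $N=qm+v$. From $\lim_{m\to\infty}\frac{m\log\lambda_m}{\log m}=1$ we get $m\log\lambda_m = (1+o(1))\log m$, hence $\log\lambda_m = (1+o(1))\frac{\log m}{m}$. Then
$$(qm+v)\log\lambda_m = (qm+v)(1+o(1))\frac{\log m}{m} = q(1+o(1))\log m + o(\log m) = q(1+o(1))\log m.$$
Also $\log(qm+v) = \log m + \log(q+v/m) = (1+o(1))\log m$. Dividing, $\frac{(qm+v)\log\lambda_m}{\log(qm+v)}\to q$. (For $q<0$ one applies this along the subsequence where $qm+v>0$, or simply notes the statement is about $m\to\infty$ with $qm+v$ eventually of one sign if $q\ne 0$; either way the algebra is identical.)

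\textbf{Main obstacle.} The delicate point is Part (2): making precise the asymptotic expansion of $t^s-1$ and of $Q(t)$ when $t=m^{c/m}\to 1$, and verifying that the lower-order terms $-t^u+1$ and the full polynomial $Q(t)$ (not just $Q(1)$) do not disturb the comparison between the exponents $2c-1$ and $c$. I would handle this by fixing $\epsilon>0$, using $t^s-1 = s(t-1) + O((t-1)^2)$ with $t-1 = (c/m)\log m + O((\log m/m)^2)$, and bounding $Q(t) \le Q(1)(1+\epsilon)$ for $t$ close to $1$; the strict inequality $c_2>1$ (resp.\ $c_1<1$) gives a definite gap $2c_2-1 > c_2$ (resp.\ $2c_1-1<c_1$) that absorbs all these error terms for $m$ large.
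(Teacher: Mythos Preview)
Your proposal is correct and follows essentially the same approach as the paper. For Part~(1) both you and the paper invoke Descartes's rule of signs together with $P_m(0)=1$, $P_m(1)=-Q(1)<0$; for Part~(2) the paper first divides through by $t^{2m+r}$ and then compares the resulting pieces $f_m\sim sc\,m^{-1}\log m$ and $g_m\sim Q(1)m^{-c}$, which is exactly your comparison of $sc\,m^{2c-1}\log m$ against $Q(1)m^c$ after multiplying back by $m^{2c}$; and for Part~(3) the paper carries the inequalities from Part~(2) through the substitution $n=qm+v$ just as your $o(1)$ computation does.
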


\begin{proof}
(1) 
Under the assumption on  $P_m(t)$,  
we have $\mathfrak{D}(P_m)=2$. 
By Descartes's rule of signs, the number of positive real roots of $P_m(t)$ is either $2$ or $0$. 
Since $P_m(0)=1$ and $P_m(1)= -Q(1)<0$, 
the number of positive real roots of $P_m(t)$ is exactly $2$.  
Because $P_m(t)$ goes to $\infty$ as $t$ does, 
$P_m(t)$ has a unique real root $\lambda_m>1$. 
\medskip

\noindent
(2) 
We have 
$$P_m(t) t^{-(2m+r)} = t^s-1 + t^{- (2m+r)} - Q(t) t^{-(m+r)} - t^{-(2m+r-u)}.$$
We define $f_m(t)$ and $g_m(t)$ such that $P_m(t) t^{-(2m+r)} = f_m(t)+ g_m(t)$ as follows. 
\begin{eqnarray*}
f_m(t)&=&  t^s-1 + t^{- (2m+r)}, \ \mbox{and}\ 
\\
g_m(t)&=& Q(t) t^{-(m+r)} + t^{-(2m+r-u)}.
\end{eqnarray*}
We let $t= m^{\frac{c}{m}}$ for $c>0$.  
Then 
\begin{eqnarray*}
f_m(m^{\frac{c}{m}}) 
&=&  (m^{\frac{c}{m}})^s-1 + (m^{\frac{c}{m}})^{-(2m+r)} 
\\
&=& 
\big((e^{\log m})^{\frac{c}{m}}\bigr)^s -1 + m^{-c(2+ \frac{r}{m})} 
\\
&=& 
e^{\frac{sc \log m}{m}} -1 + m^{-c(2+ \frac{r}{m})}. 
\end{eqnarray*}
By Maclaurin expansion of $e^{\frac{sc \log m}{m}}$, we have 
$$ e^{\frac{sc \log m}{m}} = 1 + \frac{sc \log m}{m}+ R_2, $$
where 
$$R_2 = \frac{e^w}{2} (\frac{sc \log m}{m})^2\ \mbox{for\ some\ } 0 < w < \frac{sc \log m}{m}. $$ 
Since $\frac{sc \log m}{m}$ goes to $0$ as $m$ goes to $\infty$, 
we may assume that $ \frac{e^w}{2} < B$ for some constant $B>0$. 
Then 

\begin{eqnarray*}
f_m(m^{\frac{c}{m}}) 
&=& \frac{sc \log m}{m}+ R_2 +\frac{m^{1-c(2+ \frac{r}{m})}}{m}
\\
&<&  \frac{sc \log m}{m} + B(\frac{sc \log m}{m})^2+ \frac{m^{1-c(2+ \frac{r}{m})}}{m} 
\\
&=&  \frac{sc \log m}{m} + Bs^2c^2 (\frac{\log m}{m})^2+  \frac{m^{1-c(2+ \frac{r}{m})}}{m} 
\\
&<&  \frac{sc \log m}{m} + Bs^2c^2 (\frac{\log m}{m}) +  \frac{m^{1-c(2+ \frac{r}{m})}}{m} 
\\
&=& \frac{(sc+Bs^2c^2)\log m+ m^{1-c(2+ \frac{r}{m})}}{m}. 
\end{eqnarray*}
(The last inequality comes from $0<\tfrac{\log m}{m}< 1$ for $m$ large.)
Thus 
\begin{equation}
\label{equation_f_m}
f_m(m^{\frac{c}{m}}) < \frac{(sc+Bs^2c^2)\log m+ m^{1-c(2+ \frac{r}{m})}}{m}. 
\end{equation}
The first  equality 
$f_m(m^{\frac{c}{m}}) = \tfrac{sc \log m}{m}+ R_2 +\frac{m^{1-c(2+ \frac{r}{m})}}{m}$ 
above together with  $R_2>0$ and $\frac{m^{1-c(2+ \frac{r}{m})}}{m}>0$ 
tells us that 
\begin{equation}
\label{equation_second}
f_m(m^{\frac{c}{m}}) > \frac{sc \log m}{m}. 
\end{equation}

Recall that all coefficients of $Q(t)$ (appeared in $P_m(t)$) are positive integers. 
If we write $Q(t)= \displaystyle\sum_{j=0}^{\ell} a_j t^j$, 
where $a_j \ge 0$, 
then 
\begin{eqnarray*}
g_m(m^{\frac{c}{m}}) 
&=& 
Q(m^{\frac{c}{m}})  m^{-c(1+ \frac{r}{m})} + m^{-c(2+ \frac{r}{m}-\frac{u}{m})}
\\
&=& 
\Bigl(\sum_{j=0}^{\ell} a_j m^{-c(1+ \frac{r}{m}- \frac{j}{m})}\Bigr)+ m^{-c(2+ \frac{r}{m}-\frac{u}{m})}. 
\end{eqnarray*} 
Thus we obtain 
\begin{equation}
\label{equation_g_m} 
g_m(m^{\frac{c}{m}}) =   \frac{\bigl(\sum_{j=0}^{\ell} a_{j}   m^{1 -c(1+ \frac{r}{m}- \frac{j}{m})}\bigr) + m^{1-c(2+ \frac{r}{m}-\frac{u}{m})}}{m}.
\end{equation}
For the proof of the claim (1), 
it is enough to prove that 
for $0 < c_1< 1$ and  $c_2>1$, 
we have 
$f_m(m^{\frac{c_1}{m}})<g_m(m^{\frac{c_1}{m}}) $  and 
$f_m(m^{\frac{c_2}{m}})>g_m(m^{\frac{c_2}{m}}) $ for $m $ large. 

First, suppose that $0< c < \frac{1}{2}$. 
Let us consider how the following four terms grow. 
\begin{equation}
\label{equation_four}
\log m,\ m^{1-c(2+ \frac{r}{m})},\ m^{1 -c(1+ \frac{r}{m}- \frac{j}{m})}\ \mbox{and}\ m^{1-c(2+ \frac{r}{m}-\frac{u}{m})}.
\end{equation}
The first two terms are appeared in  (\ref{equation_f_m}), and 
the last two are coming from (\ref{equation_g_m}). 
All four terms go to $\infty$ as $m$ does, 
since the last three terms have the positive powers of $m$. 
Note that for any $C>0$, 
we have 
$\log m < m^C$ for $m$ large.  
Keeping in mind of this, 
we observe that among the four terms in (\ref{equation_four}), 
$m^{1 -c(1+ \frac{r}{m}- \frac{j}{m})}$ is dominant. 
This is because
$$1 -c(1+ \frac{r}{m}- \frac{j}{m})> 1-c(2+ \frac{r}{m}-\frac{u}{m}) \ge 1-c(2+ \frac{r}{m}) $$ 
for $m$ large. 
These imply that $f_m(m^{\frac{c}{m}})<g_m(m^{\frac{c}{m}}) $ holds for $m $ large, 
since $m^{1 -c(1+ \frac{r}{m}- \frac{j}{m})}$ is appeared in the numerator of $g_m(m^{\frac{c}{m}})$, 
see (\ref{equation_g_m}).

Next, we suppose that $\frac{1}{2} \le  c <1$. 
We can check that 
$m^{1 -c(1+ \frac{r}{m}- \frac{j}{m})}$ is still dominant among the four in (\ref{equation_four}). 
(The second and fourth terms are bounded as $m$ goes to $\infty$.) 
Therefore we still have $f_m(m^{\frac{c}{m}})<g_m(m^{\frac{c}{m}}) $  for $m $ large. 

Finally we suppose that  $c>1$. 
Then the last three terms in (\ref{equation_four})  go to $0$ as $m$ goes to $\infty$, 
because they have the negative powers of $m$ for $m$ large.  
Thus the numerator of $g_m(m^{\frac{c}{m}})$, see (\ref{equation_g_m}), goes to $0$ as $m$ tends to $\infty$.  
On the other hand, 
$f_m(m^{\frac{c}{m}}) > \frac{sc \log m}{m} $ holds (see (\ref{equation_second})),  and 
hence the numerator of 
$$\frac{sc \log m + mR_2+ m^{1-c(2+ \frac{r}{m})}}{m} (= f_m(m^{\frac{c}{m}}) )$$ 
goes to $\infty$ as $m$ does.  
Thus $f_m(m^{\frac{c}{m}})>g_m(m^{\frac{c}{m}}) $ for $m $ large. 
This completes the proof of the first part of the claim (2). 

Taking the logarithm of the both sides  of 
$m^{\frac{c_1}{m}} < \lambda_m<  m^{\frac{c_2}{m}}$ 
yields 
$$c_1 < \frac{m \log \lambda_m}{ \log m} < c_2 \hspace{2mm}\mbox{for\ }m\ \mbox{large}.$$ 
Since $0<c_1<1$ and $c_2>1$ are any numbers, 
we have the desired limit.  
This completes the proof of the second half of the claim (2). 
\medskip

\noindent
(3) 
By the claim (2), 
$$\frac{c_1 \log m}{m} < \log \lambda_m < \frac{c_2 \log m}{m} \hspace{2mm}\mbox{for\ }m\ \mbox{large}.$$ 
Let us set $n= qm+v$. 
We substitute $m= \frac{n-v}{q}$ for the inequality above: 
$$\frac{c_1 \log \Bigl(\frac{n-v}{q} \Bigr)}{  \frac{n-v}{q}}<  \log \lambda_m<  
\frac{c_2 \log\Bigl(\frac{n-v}{q} \Bigr)}{  \frac{n-v}{q}}.$$
Hence 
$$\frac{q c_1 \bigl(\log(n-v) - \log q \bigr)}{n-v} < \log \lambda_m<  \frac{q c_2 \bigl(\log(n-v) - \log q \bigr)}{n-v}  .$$
We multiply all sides above by $\frac{n}{\log n}>0$ (for $n$ large). 
Then 
$$\frac{q c_1 n \bigl(\log(n-v) - \log q \bigr)}{ (n-v)\log n } 
< \frac{n \log \lambda_m}{ \log n}
<  \frac{q c_2 n \bigl(\log(n-v) - \log q \bigr)}{  (n-v)\log n} .$$
Note that 
$\frac{ n \bigl(\log(n-v) - \log q \bigr)}{ (n-v)\log n } $ goes to $1$ 
as $n$ (and hence $m$) goes to $\infty$.  
Since $0 < c_1 < 1$ and $c_2>1$ are any numbers, 
it follows that 
$$\lim_{m \to \infty} \frac{n \log \lambda_m}{ \log n}=\lim_{m \to \infty} \frac{(qm+v) \log \lambda_m}{ \log (qm+v)}=q.$$ 
\end{proof}

\section{Thurston norm and fibered $3$-manifolds}
\label{section_Thurston}

Let $M$ be an  oriented hyperbolic $3$-manifold 
with boundary $\partial M$ (possibly $\partial M = \emptyset$). 
The Thurston norm 
$\| \cdot \|$  is defined on 
an integral class  $a \in H_2(M, \partial M; {\Bbb Z})$  as follows. 
\begin{equation*} 
	\| a\|= \min_F \{- \chi(F)\}, 
\end{equation*} 
where the minimum is taken over 
all oriented surface  $F$  embedded in $M$, satisfying  $a= [F]$,  
with no components  of non-negative Euler characteristic. 
The surface  $F$  which realizes this minimum is called 
the {\it minimal representative} of $a $, denoted by  $F_a$. 
The norm  $\| \cdot \|$  defined on integral  classes 
admits a unique continuous extension 
$\| \cdot \|: H_2(M, \partial M; {\Bbb R})  \rightarrow {\Bbb R}$  which is 
linear on the ray through the origin. 
The unit ball  $U_M$ with respect to the Thurston norm  is 
a compact, convex polyhedron. 
See  \cite{Thurston1} for more details.

Suppose that $M$ is a surface bundle over the circle and 
let  $F$  be its fiber. 
The fibration determines a cohomology 
class  $a^* \in H^1(M; \mathbb{Z})$,   
and hence a homology class  $a \in H_2(M, \partial M; \mathbb{Z})$  
by Poincar\'e duality.  
Thurston proved in \cite{Thurston1} that 
there exists a top dimensional face  $\Omega$ on $\partial U_M$  such that 
$a = [F]$  is an integral class of  $int(C_{\Omega})$.  
On the other hand,  
the minimal representative  $F_a$  
for any integral class  $a$  in  $int(C_{\Omega})$   
becomes a fiber of the fibration associated to  $a$.  
Such a face $\Omega$  is called a {\it fibered face},  
and an integral class  $a \in int (C_{\Omega})$  is called a {\it fibered class}.  

The set of integral and rational classes 
of  $int(C_{\Omega})$  are denoted by 
$int(C_{\Omega}({\Bbb Z}))$  and  $int(C_{\Omega}({\Bbb Q}))$  respectively. 
When  $a \in int(C_{\Omega}(\mathbb{Z}))$  is primitive, 
the associated fibration on  $M$  has a connected fiber  
represented by $F_a$.  
Let  $\Phi_a : F_a \rightarrow F_a$  be the monodromy. 
Since  $M$  is hyperbolic,  
$\phi_a = [\Phi_a]$  is pseudo-Anosov. 
The {\it dilatation}  $\lambda(a)$  and  {\it entropy}  
$\mathrm{ent}(a) = \log \lambda(a)$  are defined as the dilatation 
and entropy of $\phi_a$ respectively.

The entropy defined on primitive fibered classes is extended to 
rational classes as follows:
For a rational number $r $ and a primitive fibered class $a$, 
the entropy  $\mathrm{ent}(ra)$ is defined 
by  $\frac{1} {|r|}  \mathrm{ent}(a)$.  
It is shown by Fried in \cite{Fried} that 
$\frac{1}{\mathrm{ent}}: int(C_{\Omega}({\Bbb Q})) \rightarrow {\Bbb R}$ 
is concave, 
and in particular 
$\mathrm{ent}: int(C_{\Omega}({\Bbb Q})) \rightarrow {\Bbb R}$  
admits a unique continuous extension 
\begin{equation*} 
	\mathrm{ent}: int(C_{\Omega}) \rightarrow {\Bbb R}.  
\end{equation*}  
Moreover 
Fried proved that 
 the restriction of  $\mathrm{ent}$  to 
the open fibered face  $int(\Omega)$ has the property such that 
$\mathrm{ent}(a)$  goes to  $ \infty$  as  $a \in int(\Omega)$  goes to 
a point on  $\partial \Omega$.  
Thus we have a continuous function 
\begin{equation*} 
	\mathrm{Ent}= \|\cdot \| \, \mathrm{ent}(\cdot) : int(C_{\Omega}) \to \mathbb{R}
\end{equation*} 
which is constant on each ray in  $int(C_{\Omega})$  through the origin. 
Thus the function 
$$\mathrm{Ent}|_{int(\Omega)}(= \mathrm{ent}|_{int(\Omega)}): int(\Omega) \rightarrow {\Bbb R}$$ 
has a minimum, denoted by $\min \mathrm{Ent}(M, \Omega)$. 
Matsumoto\cite{Matsumoto} refined the result by Fried. 
(See also McMullen\cite{McMullen}.) 
He proved that 
$\frac{1}{\mathrm{ent}}|_{int(\Omega)}: int(\Omega) \rightarrow {\Bbb R}$ 
is strictly concave. 
This implies that 
$\min \mathrm{Ent}(M, \Omega)$ is achieved by a unique point in $int(\Omega)$. 
The quantity $\min \mathrm{Ent}(M, \Omega)$ is a significant invariant on the pairs $(M, \Omega)$, 
but we do not discuss this invariant in the present paper. 

Teichm\"{u}ller polynomial $P_{\Omega}$, developed by McMullen\cite{McMullen} 
organizes the dilatations $\lambda(a)$ for all $a \in int(C_{\Omega})$. 
Once one computes $P_{\Omega}$, 
the largest real root of the polynomial determined by $P_{\Omega}$ and a given fibered class $a \in int(C_{\Omega})$ 
gives us the dilatation $\lambda(a)$.

\section{The magic $3$-manifold $N$}
\label{section_magic}

Monodromies of fibrations on $N$  have been studied in \cite{KKT,KT,KT1}. 
(See also a survey \cite{Kin}.) 
In Sections~\ref{subsection_Fibered} and \ref{subsection_DilatatioFiber}, 
we recall some results which tell us that the topology of fibered classes $a$ and the actual value of  $\lambda(a)$. 
In Section~\ref{subsection_FiberedClassMain}, 
we define a family of fibered classes $a_{(g,p)}$ of $N$ with two variables $g$ and $p$, 
and we shall prove that it is a suitable family to prove theorems in Section~\ref{section_intro} (cf. Remark~\ref{rem_experiment}).

Recall that $\varSigma_{g,n}$  is an orientable surface of genus $g$ with $n$ punctures. 
Abusing the notation, we sometimes denote by $\varSigma_{g,n}$, 
an orientable surface of genus $g$ with $n$ boundary components.

\subsection{Fibered face $\Delta$}
\label{subsection_Fibered}

Let $K_{\alpha}$, $K_{\beta}$ and  $K_{\gamma}$ be the components of  the $3$ chain link $\mathcal{C}_3$. 
They bound the oriented disks $F_{\alpha}$, $F_{\beta}$ and $F_{\gamma}$ with $2$ holes, see Figure~\ref{fig_poly}. 
Let $\alpha = [F_{\alpha}]$, $ \beta = [F_{\beta}]$, $\gamma= [F_{\gamma}] \in H_2(N, \partial N;{\Bbb Z})$. 
The set $\{\alpha, \beta, \gamma\}$ is a basis of $H_2(N,  \partial N; {\Bbb Z})$. 
Figure~\ref{fig_poly} illustrates 
the Thurston norm  ball $U_N$ for $N$ which  is the parallelepiped  with vertices 
$\pm \alpha $, $\pm \beta $, $\pm \gamma$, $\pm(\alpha + \beta + \gamma)$ (\cite[Example~3 in Section~2]{Thurston1}).  
Because of the symmetry of $\mathcal{C}_3$, 
every top dimensional face of $U_N$ is a fibered face. 

We denote a class $x \alpha + y \beta + z \gamma \in H_2(N, \partial N)$ by $(x,y,z)$. 
We pick a  fibered  face $\Delta$
with vertices $\alpha = (1,0,0)$, $\alpha+ \beta + \gamma= (1,1,1)$, $\beta=(0,1,0)$ and $-\gamma= (0,0,-1)$, 
see Figure~\ref{fig_poly}.  
The open face $ int(\Delta) $ is written by 
$$ int(\Delta) = \{(X,Y,Z)\ |\ X+Y-Z =1, \ X >0,\  Y>0,\   X >Z,\   Y>Z\}.$$  
A class  $a= (x,y,z) \in H_2(N, \partial N)$ is an element of $int(C_{\Delta})$ if and only if 
$x >0$, $y>0$, $x >z$ and $y>z$. 
In this case, we have $\|a\|= x+y-z$.

Let  $ a= (x,y,z)$ be a  fibered class in $int(C_{\Delta})$. 
The minimal representative of this class is denoted by  $F_{a}$ or $F_{(x,y,z)}$. 
We  recall the formula which tells us that the number of the boundary components of $F_a$. 
We denote the tori $ \partial \mathcal{N}(K_{\alpha})$, $ \partial \mathcal{N}(K_{\beta})$, $ \partial \mathcal{N}(K_{\gamma})$ by 
$T_{\alpha}$, $T_{\beta}$, $T_{\gamma}$ respectively, 
where  $\mathcal{N}(K)$ be a  regular neighborhood of a knot $K$ in $S^3$. 
Let us set 
$\partial_{\alpha} F_{(x,y,z)} = \partial F_{(x,y,z)} \cap T_{\alpha}$ which consists of  the parallel simple closed curves on $T_{\alpha}$. 
We define the subsets $\partial_{\beta} F_{(x,y,z)} $, $\partial_{\gamma} F_{(x,y,z)} \subset  \partial F_{(x,y,z)} $ in the same manner. 
By \cite[Lemma~3.1]{KT}, 
the number of the boundary components 
$\sharp(\partial F_{(x,y,z)}) = 
\sharp( \partial_{\alpha} F_{(x,y,z)}) + \sharp (\partial_{\beta} F_{(x,y,z)} )+ \sharp( \partial_{\gamma} F_{(x,y,z)})$ is given by 
\begin{equation}
\label{equation_num_boundary}
\sharp( \partial_{\alpha} F_{(x,y,z)}) = \gcd(x,y+z),\ \sharp (\partial_{\beta} F_{(x,y,z)} )=  \gcd(y,z+x),\ \sharp( \partial_{\gamma} F_{(x,y,z)}) = \gcd(z,x+y),
\end{equation}
where $\gcd(0,w)$ is defined by $|w|$.

\subsection{Dilatations $\lambda(a)$ and the stable foliation $\mathcal{F}_a$ of fibered classes $a$}
\label{subsection_DilatatioFiber}

Teichm\"{u}ller polynomial $P_{\Delta}$ on the fibered face $\Delta$ is computed in 
\cite[Section~3.2]{KT}, and it tells us that 
the dilatation $\lambda_{(x,y,z)}$ of a fibered class $(x,y,z) \in int(C_{\Delta})$ 
is the largest real root of 
$$f_{(x,y,z)}(t)= t^{x+y-z}-t^x - t^y - t^{x-z}- t^{y-z}+1,$$ 
see \cite[Theorem~3.1]{KT}. 
(In fact, $\lambda_{(x,y,z)}$ is a unique real root greater than $1$ of $f_{(x,y,z)}(t)$ 
by Descartes's rule of signs.)

Let $\Phi_{(x,y,z)}: F_{(x,y,z)} \rightarrow  F_{(x,y,z)}$ 
be the monodromy of the fibration associated to  a primitive  class $(x,y,z) \in int(C_{\Delta})$. 
Let $\mathcal{F}_{(x,y,z)}$ be the stable foliation of the pseudo-Anosov $\Phi_{(x,y,z)}$. 
The components of $\partial_{\alpha} F_{(x,y,z)}$ (resp. $\partial_{\beta} F_{(x,y,z)}$, $\partial_{\gamma} F_{(x,y,z)}$) 
are permuted cyclically by $\Phi_{(x,y,z)}$. 
In particular the number of prongs of $\mathcal{F}_{(x,y,z)}$ at a component of 
$\partial_{\alpha} F_{(x,y,z)}$ (resp. $\partial_{\beta} F_{(x,y,z)}$, $\partial_{\gamma} F_{(x,y,z)}$) 
is independent of the choice of the component. 
By \cite[Proposition~3.3]{KT1},  the stable foliation  $\mathcal{F}_{(x,y,z)}$  has the property such that: 
\begin{itemize}
\item 
each component  of $\partial_{\alpha} F_{(x,y,z)}$ has 
$x/\gcd(x,y+z)$ prongs, 

\item 
each component  of  $\partial_{\beta} F_{(x,y,z)}$ has 
$y/\gcd(y,x+z)$ prongs, and 

\item 
each component of $\partial_{\gamma} F_{(x,y,z)}$ has 
$(x+y-2z)/\gcd(z,x+y)$ prongs. 

\item 
$\mathcal{F}_{(x,y,z)}$ does not have singularities in the interior of $F_{(x,y,z)} $. 
\end{itemize}

\subsection{Proofs of theorems}
\label{subsection_FiberedClassMain}

For $g \ge 0$ and  $p \ge 0$, 
define a fibered class $a_{(g,p)}$ as follows. 
$$a_{(g,p)}= (p+g+1) \mathfrak{a} + (p-g)  \mathfrak{b}= 
(p+g+1, 2p+1, p-g) \in  int(C_{\Delta}).$$ 
The class $a_{(g,p)}$ is  primitive if and only if $2g+1$ and $p+g+1$ are relatively prime. 
One can check the identity 
$$B_{(g,p)}(t) =  f_{(p+g+1, 2p+1, p-g)}(t)$$  
(see Section~\ref{section_intro} for the definition of $B_{(g,p)}(t)$). 
We denote by  $r_{(g,p)}$, 
the dilatation $ \lambda(a_{(g,p)})$ of the fibered class $a_{g,p}$. 
(Thus the dilatation $r_{(g,p)} = \lambda(a_{(g,p)})$ of $a_{(g,p)}$ is a unique real root greater than $1$ of $B_{(g,p)}(t) $, 
see Section~\ref{subsection_DilatatioFiber}.)

\begin{lem}
\label{lem_asymp_agp}
We fix $g \ge 0$.  
Given $0 < c_1 <1$ and  $c_2>1$, 
we have 
$$p^{\frac{c_1}{p}} < r_{(g,p)}< p^{\frac{c_2}{p}}  \hspace{2mm} \mbox{for\  } p\ \mbox{large}. $$
In particular 
$$ \displaystyle\lim_{p \to \infty} \frac{p \log r_{(g,p)}}{ \log p}=1.$$
\end{lem}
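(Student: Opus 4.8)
The plan is to observe that $B_{(g,p)}(t)$ is nothing but a member of the polynomial family $P_m(t)$ studied in Lemma~\ref{lem_asymptoticPoly}, and then to quote that lemma directly. Writing $B_{(g,p)}(t) = t^{2p+2g+2} - t^{2p+1} - 2t^{p+g+1} - t^{2g+1} + 1$ and comparing with $P_m(t) = t^{2m+r+s} - t^{2m+r} - Q(t)t^m - t^u + 1$, I would set $m = p$, $r = 1$, $s = 2g+1$, $u = 2g+1$, and $Q(t) = 2t^{g+1}$. Since $g \ge 0$ is fixed, $r,s,u$ are fixed nonnegative integers with $s,u>0$, and $Q(t)$ is a fixed polynomial with nonnegative integer coefficients (the generality actually used in the proof of Lemma~\ref{lem_asymptoticPoly}, where $Q(t)=\sum a_j t^j$ with $a_j\ge 0$); so the standing hypotheses on the parameters are met.

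Then I would check the one remaining hypothesis of Lemma~\ref{lem_asymptoticPoly}(1), that $t^{2m+r+s}$ is the leading term. The exponents occurring in $B_{(g,p)}(t)$ are $2p+2g+2$, $2p+1$, $p+g+1$ and $2g+1$, and the inequalities $2p+2g+2 > 2p+1$, $2p+2g+2 > p+g+1$, $2p+2g+2 > 2g+1$ all reduce to trivially true statements for every $p \ge 0$. Hence $t^{2p+2g+2}$ is always the leading term, so Lemma~\ref{lem_asymptoticPoly}(1) applies and tells us that $B_{(g,p)}(t)$ has a unique real root greater than $1$. By the discussion in Section~\ref{subsection_FiberedClassMain}, that root is exactly $r_{(g,p)} = \lambda(a_{(g,p)})$, so in the notation of Lemma~\ref{lem_asymptoticPoly} we have $\lambda_p = r_{(g,p)}$.

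Finally, Lemma~\ref{lem_asymptoticPoly}(2) applied with $m = p$ yields at once that for any $0 < c_1 < 1$ and $c_2 > 1$ one has $p^{c_1/p} < r_{(g,p)} < p^{c_2/p}$ for $p$ large, together with $\lim_{p\to\infty}\frac{p\log r_{(g,p)}}{\log p}=1$, which is the assertion to be proved. I do not anticipate a genuine obstacle: essentially all of the work lives in Lemma~\ref{lem_asymptoticPoly}, and the only points requiring care are the bookkeeping of the parameter identification $(m,r,s,u,Q)=(p,1,2g+1,2g+1,2t^{g+1})$ and confirming that the unique real root greater than $1$ furnished by Lemma~\ref{lem_asymptoticPoly}(1) coincides with $r_{(g,p)}$ as defined via the Teichm\"uller polynomial of the fibered face $\Delta$.
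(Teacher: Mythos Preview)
Your proposal is correct and is exactly the paper's approach: the paper's proof reads simply ``Apply Lemma~\ref{lem_asymptoticPoly} to the polynomial $B_{(g,p)}(t)$,'' and you have supplied precisely the parameter identification $(m,r,s,u,Q)=(p,1,2g+1,2g+1,2t^{g+1})$ and the leading-term check that make this application go through.
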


\begin{proof}
Apply  Lemma~\ref{lem_asymptoticPoly} to the polynomial $B_{(g,p)}(t)$. 
\end{proof}


\begin{lem}
\label{lem_data_agp}
Suppose that  $a_{(g,p)}$ is  primitive. 
Then the minimal representative $F_{a_{(g,p)}}$ is a surface of genus $g$ with $2p+4$ boundary components, and 
the stable foliation $\mathcal{F}_{a_{(g,p)}}$ has the following properties. 
If $p+g$ is odd (resp. even), then 
$\sharp(\partial_{\alpha} F_{a_{(g,p)}})= 2$ (resp. $1$)  and 
$\sharp(\partial_{\gamma} F_{a_{(g,p)}})= 1$ (resp. $2$). 
 A component of $\partial_{\alpha}  F_{a_{(g,p)}}$ has $\frac{p+g+1}{2}$ prongs (resp. $(p+g+1)$ prongs), and 
  a component of $\partial_{\gamma}  F_{a_{(g,p)}}$ has  $(p+3g+2)$ prongs (resp. $\frac{p+3g+2}{2}$ prongs). 
%
\end{lem}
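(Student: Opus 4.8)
The plan is to derive everything by direct computation from the two structural facts recalled in Sections~\ref{subsection_Fibered} and \ref{subsection_DilatatioFiber}: the boundary-counting formula (\ref{equation_num_boundary}) and the prong formulas of \cite[Proposition~3.3]{KT1}, together with the identity $\|a\|=-\chi(F_a)$ for a fibered class. Write $a_{(g,p)}=(x,y,z)$ with $x=p+g+1$, $y=2p+1$, $z=p-g$. First I would record $\|a_{(g,p)}\|=x+y-z=2p+2g+2$, so that $\chi(F_{a_{(g,p)}})=-(2p+2g+2)$, and note that primitivity of $a_{(g,p)}$ makes the fiber $F_{a_{(g,p)}}$ connected (Section~\ref{subsection_Fibered}), so that the Euler characteristic determines the genus once the number of boundary components is known.

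Next I would evaluate the three gcd's in (\ref{equation_num_boundary}); the point is that each one collapses after a single reduction step using the hypothesis $\gcd(2g+1,p+g+1)=1$. For $\partial_{\alpha}$ one has $y+z=3p-g+1=3(p+g+1)-2(2g+1)$, hence $\gcd(x,y+z)=\gcd(p+g+1,2(2g+1))=\gcd(p+g+1,2)$, which is $2$ when $p+g$ is odd and $1$ when $p+g$ is even. For $\partial_{\beta}$ one has $x+z=2p+1=y$, so $\gcd(y,x+z)=2p+1$ and $\partial_{\beta}F_{a_{(g,p)}}$ always has $2p+1$ components. For $\partial_{\gamma}$ one has $x+y=3p+g+2=3(p-g)+2(2g+1)$, and since $p-g=(p+g+1)-(2g+1)$ we get $\gcd(p-g,2g+1)=1$, whence $\gcd(z,x+y)=\gcd(p-g,2)=\gcd(p+g,2)$, which is $1$ when $p+g$ is odd and $2$ when $p+g$ is even. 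Adding the three counts gives $2+(2p+1)+1=2p+4$ in the odd case and $1+(2p+1)+2=2p+4$ in the even case; together with $\chi(F_{a_{(g,p)}})=-(2p+2g+2)$ and $\chi=2-2g'-(2p+4)$ for a connected genus-$g'$ surface with $2p+4$ boundary circles, this forces $g'=g$.

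Finally, for the foliation data I would substitute into the three bullet formulas of \cite[Proposition~3.3]{KT1}. Each component of $\partial_{\alpha}F_{a_{(g,p)}}$ carries $x/\gcd(x,y+z)=(p+g+1)/\gcd(p+g,2)$ prongs, i.e.\ $\tfrac{p+g+1}{2}$ or $p+g+1$ according to the parity of $p+g$. Computing $x+y-2z=p+3g+2$, each component of $\partial_{\gamma}F_{a_{(g,p)}}$ carries $(x+y-2z)/\gcd(z,x+y)=(p+3g+2)/\gcd(p+g,2)$ prongs, i.e.\ $p+3g+2$ or $\tfrac{p+3g+2}{2}$; and the last bullet of \cite[Proposition~3.3]{KT1} gives directly that $\mathcal{F}_{a_{(g,p)}}$ has no interior singularities. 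There is no genuine obstacle here: the argument is entirely bookkeeping, and the only things requiring care are invoking the primitivity hypothesis at exactly the right places to collapse the gcd's, and keeping the parities of $p+g$ and $p-g$ — which always agree — consistent throughout.
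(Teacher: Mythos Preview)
Your proof is correct and follows essentially the same route as the paper: both compute the three gcd's from (\ref{equation_num_boundary}), reduce them via the primitivity hypothesis $\gcd(2g+1,p+g+1)=1$, recover the genus from $\|a_{(g,p)}\|$ together with the boundary count, and then read off the prong data from the formulas in Section~\ref{subsection_DilatatioFiber}. One minor slip: in the $\partial_{\alpha}$ prong count you wrote $(p+g+1)/\gcd(p+g,2)$ where you mean $(p+g+1)/\gcd(p+g+1,2)$ (these differ, since $p+g$ and $p+g+1$ have opposite parities), though the case-by-case conclusion you state immediately afterward is correct.
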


\begin{proof} 
By (\ref{equation_num_boundary}), 
we have  that $\sharp(\partial_{\beta} F_{a_{(g,p)}})= 2p+1 $. 
We have 
$$\sharp(\partial_{\alpha} F_{a_{(g,p)}})= \gcd(p+g+1, 3p-g+1) = \gcd(p+g+1, 2(2g+1)).$$ 
Since $a_{(g,p)}$ is primitive, $p+g+1$ and $2g+1$ must be  relatively prime. 
Hence 
$\sharp(\partial_{\alpha} F_{a_{(g,p)}})=1$ (resp. $2$) 
if $p+g$ is even (resp. odd). 
Let us compute $\sharp(\partial_{\gamma} F_{a_{(g,p)}})$. 
We have 
$$\sharp(\partial_{\gamma} F_{a_{(g,p)}})= \gcd(3p+g+2, p-g)= \gcd(2(2g+1), p-g).$$ 
Since $\gcd(2g+1, p-g)= \gcd(2g+1, p+g+1)=1$, 
we have that 
$\sharp(\partial_{\gamma} F_{a_{(g,p)}})= 2$ (resp. $1$) if $p-g$ is even (resp. odd), equivalently $p+g$ is even (resp. odd).  
The genus of $F_{a_{(g,p)}}$ is computed from the identities 
$\|a_{(g,p)}\|(= |\chi(F_{a_{(g,p)}})|) = 2p+2g+2$ and $\sharp(\partial F_{a_{(g,p)}})= 2p+4$.

The singularity data of $\mathcal{F}_{a_{(g,p)}}$ is obtained from the formula in Section~\ref{subsection_DilatatioFiber}. 
\end{proof}

\noindent
By Lemma~\ref{lem_data_agp}, it is straightforward to prove

\begin{lem}
\label{lem_No_one}
Suppose that $a_{(g,p)}$ is primitive. 
Then 
$(g,p) \not\in\{(0,0), (0,1), (1,0)\}$ 
if and only if 
$\mathcal{F}_{a_{(g,p)}}$ has the property such that 
each component of $\partial_{\alpha}  F_{a_{(g,p)}} \cup \partial_{\gamma}  F_{a_{(g,p)}}$
does not have $1$ prong. 
In particular if $g \ge 2$ and $p \ge 0$, then 
each component of $\partial_{\alpha}  F_{a_{(g,p)}} \cup \partial_{\gamma}  F_{a_{(g,p)}}$ 
does not have $1$ prong. 
\end{lem}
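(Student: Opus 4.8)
The plan is to read everything off from the prong data established in Lemma~\ref{lem_data_agp} and to reduce the statement to solving a couple of elementary equations in $g$ and $p$. Assume throughout that $a_{(g,p)}$ is primitive, so that Lemma~\ref{lem_data_agp} applies; in particular $\mathcal{F}_{a_{(g,p)}}$ has no singularities in the interior of $F_{a_{(g,p)}}$, so the only place a $1$-prong could occur is along $\partial_{\alpha} F_{a_{(g,p)}}$ or $\partial_{\gamma} F_{a_{(g,p)}}$. Moreover, since the monodromy permutes the components of each of $\partial_{\alpha}$, $\partial_{\beta}$, $\partial_{\gamma}$ cyclically, the prong number is constant on each of these families, so it suffices to test the single prong value attached to $\partial_{\alpha}$ and the single one attached to $\partial_{\gamma}$.

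First I would split into the two cases of Lemma~\ref{lem_data_agp}. If $p+g$ is odd, the prong numbers along $\partial_{\alpha}$ and $\partial_{\gamma}$ are $\frac{p+g+1}{2}$ and $p+3g+2$; the second is always $\ge 2$ since $p,g \ge 0$, while the first equals $1$ exactly when $p+g=1$, i.e. $(g,p) \in \{(0,1),(1,0)\}$. If $p+g$ is even, the prong numbers are $p+g+1$ and $\frac{p+3g+2}{2}$; here $p+g+1=1$ forces $p+g=0$, and $\frac{p+3g+2}{2}=1$ forces $p+3g=0$, both of which hold precisely when $(g,p)=(0,0)$. Combining the two cases, a $1$-prong occurs along a component of $\partial_{\alpha} F_{a_{(g,p)}} \cup \partial_{\gamma} F_{a_{(g,p)}}$ if and only if $(g,p) \in \{(0,0),(0,1),(1,0)\}$, which is the contrapositive of the first assertion. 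I would note in passing that each of the three exceptional pairs is indeed primitive, since $\gcd(1,1)=\gcd(1,2)=\gcd(3,2)=1$, so the equivalence has genuine content on both sides.

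For the \emph{in particular} clause, suppose $g \ge 2$ and $p \ge 0$. If $p+g$ is odd then $p+g \ge 3$, so $\frac{p+g+1}{2} \ge 2$ and $p+3g+2 \ge 8$; if $p+g$ is even then $p+g+1 \ge 3$ and $\frac{p+3g+2}{2} \ge 4$. In every case all prong numbers along $\partial_{\alpha} F_{a_{(g,p)}} \cup \partial_{\gamma} F_{a_{(g,p)}}$ are at least $2$, so no component carries a $1$-prong.

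There is no real obstacle here: once Lemma~\ref{lem_data_agp} is in hand the argument is a finite case check, and the only point requiring a moment's care is to confirm that the three exceptional pairs satisfy the primitivity hypothesis so that the stated equivalence is not vacuous.
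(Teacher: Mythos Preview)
Your argument is correct and matches the paper's approach, which simply states that the lemma is straightforward from Lemma~\ref{lem_data_agp}; your case split and the small equations you solve are exactly the intended verification. One peripheral slip in your framing (not used in the proof): each component of $\partial_{\beta} F_{a_{(g,p)}}$ has $(2p+1)/\gcd(2p+1,2p+1)=1$ prong, so it is not true that a $1$-prong can only occur along $\partial_{\alpha}$ or $\partial_{\gamma}$---but since the lemma concerns only $\partial_{\alpha} F_{a_{(g,p)}} \cup \partial_{\gamma} F_{a_{(g,p)}}$, this does not affect anything.
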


We are now ready to prove theorems in Section~\ref{section_intro}. 

\begin{proof}[Proof of Theorem~\ref{thm_Asymptotic}]
There exists a sequence of primitive fibered classes $\{a_{(g, p_i)}\}_{i=0}^{\infty}$ with $p_i \to \infty$. 
(In fact, if we take $p_i= (g+1)+ (2g+1)i$, then 
$2g+1$ and $p_i+g+1$ are relatively prime. 
Hence $a_{(g, p_i)}$ is primitive.) 
Then $N$ is a $\varSigma_{g, 2p_i+4}$-bundle over the circle 
whose monodromy of the fibration has the dilatation $r_{(g,p_i)}$. 
Therefore $\delta_{g,2p_i+4} \le r_{(g,p_i)}$. 
If we set $n_i= 2p_i+4$, then 
$$\frac{n_i \log \delta_{g, n_i}}{\log n_i} \le \frac{n_i \log r_{(g,p_i)}}{ \log n_i}=  \frac{(2p_i+4) r_{(g,p_i)}}{ \log (2p_i+4)}.$$
The right hand side goes to $2$ as $i$ goes to $\infty$, see 
Lemmas~\ref{lem_asymptoticPoly}(3) and \ref{lem_asymp_agp}. 
This completes the proof. 
\end{proof}

\begin{proof}[Proof of Theorem~\ref{thm_UpperBound}.]
The monodromy $\Phi_{a_{(g,p)}}$ of the fibration associated to the primitive fibered class $a_{(g,p)}$ 
is defined on the surface of genus $g$ with $2p+4$ boundary components. 
It  has the dilatation $r_{(g,p)}$, and hence 
$\delta_{g, 2p+4} \le r_{(g,p)}$. 

Now let us prove $\delta_{g, 2p+1} \le r_{(g,p)}$. 
The fibration associated to $a_{(g,p)}$ extends naturally to a fibration 
on the manifold obtained from $N$ by Dehn filling two cusps 
specified by the tori $T_{\alpha}$ and $T_{\gamma}$ 
along the boundary slopes of the fiber.  
Then  $\Phi_{a_{(g,p)}}: F_{a_{(g,p)}} \rightarrow F_{a_{(g,p)}} $ 
extends to the monodromy $\widehat{\Phi}: \widehat{F} \rightarrow \widehat{F}$ 
of the extended fibration, 
where the extended fiber $\widehat{F}$ is obtained from $F_{a_{(g,p)}}$ by filling each disk bounded by 
each component of $\partial_{\alpha}  F_{a_{(g,p)}} \cup \partial_{\gamma}  F_{a_{(g,p)}}$. 
Thus $\widehat{F}$ has the genus $g$ with $2p+1$ boundary components, see Lemma~\ref{lem_data_agp}. 
By Lemma~\ref{lem_No_one}, 
 $\mathcal{F}_{a_{(g,p)}}$ does not have $1$ prong at each component of 
$\partial_{\alpha}  F_{a_{(g,p)}} \cup \partial_{\gamma}  F_{a_{(g,p)}}$. 
Hence $\mathcal{F}_{a_{(g,p)}}$ extends canonically to the stable foliation $\widehat{\mathcal{F}}$ of $\widehat{\Phi}$. 
Therefore $\widehat{\phi}= [\widehat{\Phi}] $ is pseudo-Anosov with the same dilatation as $\Phi_{a_{(g,p)}}$. 
This implies that 
$\delta_{g, 2p+1} \le r_{(g,p)}$. 

The proofs of the rest of bounds $\delta_{g, 2p+2} \le r_{(g,p)}$ and $\delta_{g, 2p+3} \le r_{(g,p)}$ are similar. 
In fact, the extended fiber of the fibration on the manifold obtained from $N$ 
by Dehn filling a cusp specified by $T_{\alpha}$ or $T_{\gamma}$ 
along the boundary slope of the fiber  
has the genus $g$ with $2p+2$ or $2p+3$ boundary components, see Lemma~\ref{lem_data_agp}. 
Lemma~\ref{lem_No_one} ensures that the extended monodromy is pseudo-Anosov with the same dilatation as $\Phi_{a_{(g,p)}}$. 
\end{proof}

\begin{proof}[Proof of Theorem~\ref{thm_main}]
By Theorem~\ref{thm_UpperBound} together with the assumption $(*)$ in  Theorem~\ref{thm_main}, 
we have that for any $p \ge 0$ and for $j \in \{3,4\}$, 
$$\delta_{g,2p+j} \le r_{(g,p)} \hspace{2mm}\mbox{or} \hspace{2mm}\delta_{g,2p+j}\le r_{(g,p+1)}.$$
Thus 
\begin{equation}
\label{equation_claim_main}
\frac{(2p+j) \log \delta_{g, 2p+j}}{\log(2p+j)} \le  \frac{(2p+j) \log r_{(g,p)}}{\log(2p+j)} \hspace{2mm} 
\mbox{or} \hspace{2mm} 
\frac{(2p+j) \log \delta_{g, 2p+j}}{\log(2p+j)} \le  \frac{(2p+j) \log r_{(g,p+1)}}{\log(2p+j)}.
\end{equation}
By Lemma~\ref{lem_asymptoticPoly}, 
it is easy to see that 
the both right hand sides in (\ref{equation_claim_main}) 
go to $2$ as $p$ goes to $\infty$. 
Thus 
$$\limsup_{p \to \infty} \frac{(2p+j) \log \delta_{g, 2p+j}}{\log(2p+j)} \le 2.$$ 
Since this holds for  $j \in \{3,4\}$, the proof is done. 
\end{proof}

\begin{proof}[{\it Proof of Proposition~\ref{prop_volume}}]
We prove the claim in the second half. 
(The proof in the first half is similar.) 
If $g \ge 2$ satisfies $(*)$, then for any $p \ge 0$ 
there exist a $\varSigma_{g, 2p+3}$-bundle and a $\varSigma_{g, 2p+4}$-bundle over the circle 
obtained from $N$, 
see proof of Theorem~\ref{thm_main}. 
More precisely 
such a bundle is homeomorphic to $N$ or 
it is obtained from $N$ by Dehn filling cusps along the boundary slopes of the fiber. 
Thus Proposition~\ref{prop_volume} holds from the result which says that 
the hyperbolic volume decreases after Dehn filling, see \cite{NZ,Thurston}. 
\end{proof}

\begin{rem}
\label{rem_experiment}
To address Question~\ref{ques_analogy}, 
we had explored fibered classes of the magic manifold 
whose dilatations have a suitable asymptotic behavior. 
We found a family of primitive fibered classes $a_{(g,p)}$ by computer. 
By Lemma~\ref{lem_data_agp}, 
most of the components of $\partial F_{a_{(g,p)}}$ lie on the torus $T_{\beta}$. 
The pseudo-Anosov stable foliation associated to $a_{(g,p)}$ has the property such that 
each  component of $\partial_{\beta} F_{a_{(g,p)}}$ has $1$ prong. 
The striking property of $a_{(g,p)}$ is  that 
the slope of the components of $\partial_{\beta} F_{a_{(g,p)}}$ is exactly equal to $-1$. 
Moreover, for any fixed $g$,  the projective class $\overline{a}_{(g,p)}$ goes to a single point $(\tfrac{1}{2}, 1, \tfrac{1}{2}) \in \partial \Delta$ 
as $p$ goes to $\infty$. 
On the other hand, it is proved by Martelli and Petronio\cite{MP} that 
the manifold $N(-1)$ obtained from $N$ by Dehn filling a cusp along the boundary slope $-1$ is not hyperbolic. 
The property such that each component of $\partial_{\beta} F_{a_{(g,p)}}$ has $1$ prong 
can also be seen from the fact hat $N(-1)$ is a non hyperbolic manifold.  
\end{rem}


\begin{thebibliography}{99}

\bibitem{AD} 
J.~W.~Aaber and N.~M.~Dunfield, 
{\it Closed surface bundles of least volume}, 
Algebraic and Geometric Topology 10 (2010), 2315-2342. 









\bibitem{CH} 
J.~H.~Cho and J.~Y.~Ham,   
{\it The minimal dilatation of a genus-two surface}, 
Experimental Mathematics 17 (2008), 257-267. 





\bibitem{FLP}
A.~Fathi, F.~Laudenbach and V.~Poenaru, 
Travaux de Thurston sur les surfaces,
Ast\'erisque, 66-67, 
Soci\'et\'e Math\'ematique de France, Paris (1979). 

\bibitem{Fried}
D.~Fried, 
{\it Flow equivalence, hyperbolic systems and a new zeta function for flows}, 
Commentarii Mathematici Helvetici 57 (1982), 237-259. 




\bibitem{Hironaka} 
E.~Hironaka, 
{\it Small dilatation  mapping classes coming from the simplest hyperbolic braid}, 
Algebraic and Geometric Topology 10 (2010), 2041-2060. 

\bibitem{HK}
E.~Hironaka and E.~Kin, 
{\it A family of pseudo-Anosov braids with small dilatation}, 
Algebraic and Geometric Topology 6 (2006), 699-738. 

\bibitem{Ivanov}
N.~V.~Ivanov,
{\it Stretching factors of pseudo-Anosov homeomorphisms},
Journal of Soviet Mathematics, 52 (1990), 2819--2822, which is translated from
Zap. Nauchu. Sem. Leningrad. Otdel. Mat. Inst. Steklov.
(LOMI), 167 (1988), 111--116.

\bibitem{Kin} 
E.~Kin, 
{\it Notes on pseudo-Anosovs with small dilatations coming from the magic 3-manifold}, 
Representation spaces, twisted topological invariants and geometric structures of 3-manifolds, 
RIMS Kokyuroku 1836 (2013) 45-64. 


\bibitem{KKT} 
E.~Kin, S.~Kojima and M.~Takasawa, 
{\it Minimal dilatations of pseudo-Anosovs generated by the magic $3$-manifold and their asymptotic behavior}, 
preprint (2011), 
arXiv:1104.3939v3, 
to appear in ``Algebraic and geometric topology". 


%
 

\bibitem{KT} 
E.~Kin and M.~Takasawa, 
{\it Pseudo-Anosov braids with small entropy and the magic $3$-manifold}, 
Communications in Analysis and Geometry 19 (4) (2011), 705-758. 

\bibitem{KT1} 
E.~Kin and M.~Takasawa, 
{\it Pseudo-Anosovs on closed surfaces having small entropy and the Whitehead sister link exterior}, 
Journal of the Mathematical Society of Japan 65 (2) (2013), 411-446.  




\bibitem{MP} 
B.~Martelli and C.~Petronio, 
{\it Dehn filling of the ``magic" $3$-manifold}, 
Communications in Analysis and Geometry 14 (2006), 969-1026.

\bibitem{Matsumoto}
S.~Matsumoto, 
{\it Topological entropy and Thurston's norm of atoroidal surface bundles over the circle}, 
Journal of the Faculty of Science, University of Tokyo, Section IA. Mathematics 34 (1987), 763-778. 





\bibitem{McMullen}
C.~McMullen, 
{\it Polynomial invariants for fibered $3$-manifolds and Teichm\"{u}ler geodesic for foliations}, 
Annales Scientifiques de l'\'{E}cole Normale Sup\'{e}rieure. Quatri\`{e}me S\'{e}rie  33 (2000), 519-560. 


\bibitem{NZ} 
W.~D.~Neumann and D.~Zagier, 
Volumes of hyperbolic three-manifolds, 
Topology 24 (3) (1985), 
307-332. 


\bibitem{Penner}
R.~C.~Penner, 
{\it Bounds on least dilatations}, 
Proceedings of the American Mathematical Society 113 (1991), 443-450. 

\bibitem{Thurston}
W.~Thurston, 
The geometry and topology of $3$-manifolds, 
Lecture Notes, Princeton University (1979). 

\bibitem{Thurston1}
W.~Thurston, 
{\it A norm of the homology of $3$-manifolds}, 
Memoirs of the American Mathematical Society 339 (1986), 99-130. 

\bibitem{Thurston2} 
W.~Thurston, 
{\it On the geometry and dynamics of diffeomorphisms of surfaces}, 
Bulletin of the American Mathematical Society 19 (1988),  417-431.

\bibitem{Thurston3} 
W.~Thurston, 
{\it Hyperbolic structures on 3-manifolds II: Surface groups and 
3-manifolds which fiber over the circle}, preprint, 
arXiv:math/9801045

\bibitem{Tsai} 
C.~Y.~Tsai, 
{\it The asymptotic behavior of least pseudo-Anosov dilatations}, 
Geometry and Topology 13 (2009), 2253-2278. 

%
\bibitem{Valdivia} 
A.~D.~Valdivia, 
{\it Sequences of pseudo-Anosov mapping classes and their asymptotic behavior}, 
New York Journal of Mathematics 18 (2012), 609-620.


\bibitem{Wang} 
X.~Wang, 
{\it A simple proof of Descartes's rule of signs}, 
American Mathematical Monthly
111 (6) (2004),  525-526.  







\end{thebibliography}
\end{document}